 \theoremstyle{definition}
 \newtheorem{definition}{Definition}
 \theoremstyle{plain}
 \newtheorem{theorem}{Theorem}
 \newtheorem*{thm*}{Tеорема}
 \newtheorem{proposition}{Proposition}
  \newtheorem*{prop*}{Предложение}
  \newtheorem*{cor*}{Следствие}
 \newtheorem{lemma}{Lemma}
  \newtheorem*{lem*}{Лемма}
 \theoremstyle{remark}
 \newtheorem{remark}[definition]{Remark}
  \newcounter{ab}
\author{D.~V.~Artamonov\footnote{ E-mail:~artamonov.dmitri@gmail.com}}
\title{The Schlesinger system and isomonodromic deformations of bundles with connections on Riemann surfaces}
\begin{document}

\maketitle

\begin{abstract}
We introduce a way of presentation of pairs $(E,\nabla)$, where $E$~
is a bundle on a Riemann surface and $\nabla$~is a logarithmic
connection in~$E$, which is based on a presentation of the surface
as a factor of the exterior of the unit disc.  In this presentation
we write the local equation of isomonodormic deformation of pairs
$(E,\nabla)$.  These conditions are written as a modified
Schlesinger system on a Riemann sphere (and in the typical case just
as an ordinary Schlesinger system) plus some linear system.
\end{abstract}

\section{Introduction}
\label{sec1}

Let   us be given a Fuchsian system of differential equations on a
Riemann sphere:
\begin{equation*}
\frac{dy}{dz}=\sum_i\frac{B_i}{z-a_i}y.
\end{equation*}
Let us change locations of singularities~$a_i$ in such a way that
the monodromy is preserved and the singularities do not confluence.
Then the residues~$B_i$ become  multivalued functions  of~$a_i$, and
the condition that the family of systems
\begin{equation*}
\frac{dy}{dz}=\sum_i\frac{B_i(a_1,\dots,a_n)}{z-a_i}y
\end{equation*}
is isomonodromic can be written as a system of nonlinear
differential equations for the functions~$B_i$. In particular if one
restricts to the Schlesinger deformations  (mention that in typical
case all deformations are Schlesinger, see~\cite{1}), than this
system is the Schlesinger system.

Take instead of the Riemann sphere a Riemann surface of positive
genus. In~ this case it is natural to consider not the deformations
of linear systems  (in other terms, connections in a trivial
bundle),  but isomonodromic deformations of bundles with
connections.  Both a connections and a bundle are allowed to change.
The motivation for this point of view can be found in the
introduction to the paper~\cite{2}.  It is also natural to allow to
change a module of a complex structure.  Such deformations were
considered by different authors in~\cite{3}--\cite{9} and also in
many other papers.

As usual special cases are considered, for example surfaces of
genus~1. Krichever wrote equations that describe deformations in the
general case ~\cite{5}.  His approach is based on meromorphic
triviality of bundles on Riemann surfaces,  however equations that
are obtained in~\cite{5},  differ much from the Schlesinger system.

In the case of genus~$1$ another approach to the description of
isomonodromic deformations is known: the elliptic Schlesinger system
(see for example papers~\cite{2},~\cite{10}),  which is some system
of equation describing isomonodromic deformation  of bundles with
connections on a torus; this system is a generalization of the
Sclesinger system . In the paper~\cite{2} the author says that is
desirable to generalize this construction to the case of higher
genus and to write in this case an analog of the Schlesinger system.
We fins this this generalization in the present paper. In particular
we prove that in the case of Riemann surfaces isomonodromic
deformations can be described by the Schlesinger system plus some
system of linear equation.

The paper is organized as follows. In section~\ref{sec2} the space
of parameters of derormations is described: this the Teichmuller
space with marked points  that are locations of singularities.  We
take the Teichmuller space not the space of modules by the following
reason:  in order to speak about monodromy matrix corresponding to
bypasses along canonical cuts, one must fix somehow  canonical cuts,
but the  pairs ``a complex structure$+$a system of canonical cuts''
modulo some equivalence form the Teichmuller space.  For an explicit
description of the space of parameters of the deformation we present
the Riemann surface as factor of the exterior of a unit disk by the
action of a Fuchsian group.  Then we choose in a canonical way the
fundamental polygon with $4g$  vertices ($g$~is the genus of the
surface). The singularities correspond to some points of the
polygon.

In the section~\ref{sec3}  we suggest a way of description of
bundles with connections on a Riemann surface.  They are described
by the following data: a form~$\omega$ on a Riemann sphere and a
collection of nondegenerate matrices $S_{x_0^1,x_0^i}$,
$i=2,\dots,4g$, where indices~$x_0^i$ correspond to the vertices of
the fundamental polygon. The form~$\omega$ is constructed as
follows.  There exists a factorizing map from the fundamental to the
Riemann surface. Take an inverse   of the bundle with connection on
the Riemann surface. The obtained bundle with connection on the
fundamental polygon is continued to a bundle with a connection on
the whole Riemann sphere with an additional singularity in the zero.
We fix then a meromorphic trivialization of this bundle on the
Riemann sphere, which is holomorphic on $\mathbb{C}\setminus\{0\}$.
In this trivialization the connection is defined by a form, this is
the form ~$\omega$ (see.~sec.~\ref{sec3.1}).

Explain the  construction of the matrices $S_{x_0^1,x_0^i}$.  The
bundle with a connection on the fundamental polygon is an inverse
image of  the bundle with the connection on the Riemann surface. All
vertices of the polygon are glued togather. Hence there exists an
operator that identifies stalks over vertices~$x_0^1$ and~$x_0^i$.
The matrix~$S_{x_0^1,x_0^i}$ is the matrix of this operator
(see~sec.~\ref{sec3.2}).

The constructed data is sufficient  for a reconstruct a bundle with
a connection on the surface (see~theorem~\ref{th1}). Note that the
procedure of construction  of the form and matrices is
non-canonical:  different forms and matrices can give equivalent
bundles.  But it is well-known that  all possible bundles do not
form any  ``good'' space (see~discussion in the paper~\cite{11}),
that is why there is no way of description of bundles with
connections.  Essentially we consider not the deformations of
bundles with connections, but the deformations of data, introduced
above (note that in the paper~\cite{5}
 instead of bundles with connections actually the parameters of meromorphic trivialization named Turin parameters are considered instead of).

In the section~\ref{sec4}  the Schlesinger isomonodromic
deformations of bundles with connections  on a Riemann surface are
defined and the equations are obtained, that describe evolution of
data, introduced in section~\ref{sec3}, under the Schlesinger
isomonodromic deformations. It is proved that the isomonodromic
deformations are described by a system of nonlinear equations for
the coefficients of the form ~$\omega$ (in typical case this just
the Schlesinger system)  and some linear system for the matrices
$S_{x_0^1x_0^i}$ (see.~Theorem~\ref{th2}).  The Sclesinger system
can be presented as a Hamiltonian system. Thus the approach
suggested in the present paper does not lead to the appearance of
new integrable systems.

The relations between the present approach and Krichever's approach
from~\cite{5} are considered in the section~\ref{sec5}.

\section{The space of parameters of defomations and the deformed objects}
\label{sec2}

\subsection{The space of parameters}
\label{sec2.1} Let $M$~be a Riemann surface of genus~$g>1$, the case
of genus~$g=0$, $g=1$ we consider trivial. The aim of the present
paper is to find an analog of the Schlesinger system (ordinary or
elliptic), well-known for genuses $g=0$, $g=1$, in the case of
higher genus.

Fix an initial point~$x_0$ on~$M$.

\begin{definition}
\label{def1} Let~$T$ be the Teichmuller space with~$n$ marked points
$a_1,\dots,a_n$, where $a_i\neq a_j$ for $i\neq j$. The
space~$\widetilde{T}$  of parameters of deformations is the
universal covering of the space~$T$.
\end{definition}

Take an image of a point $\tau\in\widetilde{T}$ in the space~$T$.
Then one can speak about marked points, corresponding to~$\tau$,
and also about a complex structure and a system of canonical cuts
corresponding to the point ~$\tau$, in other words, about a point of
the Teichmulle space  (without marked points), corresponding to the
point~$\tau$.  Below we shall call the Teichmuller space the
ordinary Teichmuller space without marked points, its points we
shall call marked Riemann surfaces.

The space~$\widetilde{T}_1$,  on which isomonodromic families of
pairs ``a bundle$+$a connection'', is constructed as follows.

\begin{definition}
\label{def2} Let~$T_1$ be the Teichmuller space with $n+1$ marked
points $z$, $a_1,\dots,a_n$, where $a_i\neq a_j$ for $i\neq j$.  The
space~$\widetilde{T}_1$ is the univeral covering of the space~$T_1$
by the variables~$a_i$.
\end{definition}

There exists a mapping $\widetilde{T}_1\to\widetilde{T}$,
``forgetting'' the marked point ~$z$.

\begin{definition}
\label{def3} Let $\tau\in\widetilde{T}$. Denote by
$\widetilde{T}_1|_{\tau}$ the preimage of the point~$\tau$ under the
mapping $\widetilde{T}_1\to\widetilde{T}$.
\end{definition}

The space $\widetilde{T}_1|_{\tau}$ can be viewed as a Riemann
surface with a complex structure, canonical cuts and marked points
$a_1,\dots,a_n$, defined by the point $\tau\in\widetilde{T}$.

The universal covering is taken to provide the global existence of
Schlesinger deformations for every initial condition (see~the
proposition~\ref{pr5} in  section~\ref{sec4}). In the present paper
we write only local equations of isomonodromic deformations. If one
considers only small changes of parameters one can use~$T$ instead
of~$\widetilde{T}$ as the space of parameters,  then parameters are
just locations of singularities and a point in the Teichmuller
space.  The deformed objects are pairs ``a bundle$+$a connection'',
not a form (a system of linear equations or, equivalently, a
connection in a trivial bundle as in the case of genus~0). When we
change positions of singularities both a bundle and a connection are
changing.  The evolution of a bundle and a connection is defined by
the change of a point in the Teichmuller space and of locations of
singularities.

\subsection{The fundamental polygon}
\label{sec2.2} Let $D$~be the exterior of the unit disc.  A point in
the Teichmuller can be defined by a collection of automorphisms
$Q_1,\dots,Q_{2g}\in\operatorname{Aut}D$, that satisfy the following
properties ~\cite{12}:

1) the equality $\prod_jQ_{2j}Q_{2j-1}Q^{-1}_{2j}Q^{-1}_{2j-1}=1$
holds;

2) the subgroup $G\subset\operatorname{Aut}D$, generated by
$Q_1,\dots,Q_{2g}$, is Fuchsian;

3) there exists a fundamental domain of the action of the subgroup
$G\subset\operatorname{Aut}D$ on~$D$, which does not intersect
$\partial D$.

The surface~$M$  is reconstructed as a factor of the exterior of the
unit disc under the action of the subgroup
$G\subset\operatorname{Aut} D$, generated by automorphisms
$Q_1,\dots,Q_{2g}$.  This action has a fundamental  domain, which is
non-euclidean polygon~$U$ with~$4g$ edges
 ("noneucledean"  means that every edge is a noneucledean line).  Canonical cuts are reconstructed as images of edges of the fundamental polygon.

Two collections $Q_1,\dots,Q_{2g}$ and $K_1,\dots,K_{2g}$ define the
same point in the Teichmuller space if and only if there exists an
automorphism~$Q$ such that
$K_1=QQ_1Q^{-1},\dots,K_{2g}=QQ_{2g}Q^{-1}$.  There exist a
normalized way to choose a collection of automorphisms
$Q_1,\dots,Q_{2g}$, such that it is reconstructed canonically from a
point in the Teichmuller space.  A traditional way of normalization
is described in~\cite{12}.  When a normalization is fixed one can
construct a canonical fundamental polygon. Its vertices~$x_0^i$
depend smoothly (but non complex-analytic) on a point in the
Teichmuller space.

Change a traditional way of normalization in such a way that
$x_0^1\equiv\infty$ become independent from the point in the
Teichmuller space. In order to do it let us change canonically an
automorphism $Q_z$~- of the set~$D$, that maps~$z$ to $\infty\in D$.
Define $Q_1,\dots,Q_{2g}$~as a normalized collection of generators
in a traditional sense, and let
 $x_0^1$~be a vertex of the fundamental polygon~$U$.
Take new generators
$Q_{x_0^1}Q_1Q^{-1}_{x_0^1},\dots,Q_{x_0^1}Q_{2g}Q^{-1}_{x_0^1}$.
They define a point in the Teichmuller space.  Also $Q_{x_0^1}(U)$
is a fundamental polygon for the action of the Fuchsian group given
by  a new set of generators. The first vertex of the polygon
$Q_{x_0^1}(U)$ is $\infty$.

We have proved.

\begin{proposition}
\label{pr1} There exists a  canonical way  of choosing a fundamental
polygon such that it (i.e. coordinates of its vertices) depend
smoothly (not complex-analytic) on a point in the Teichmuller space,
and one of its vertices is always $\infty$.
\end{proposition}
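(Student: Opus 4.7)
The plan is to follow the two-step recipe already suggested in the preceding paragraphs: first invoke the known classical normalization to obtain a fundamental polygon whose vertices depend smoothly on the Teichmüller point, then conjugate the whole picture by a canonically chosen automorphism of $D$ that sends the distinguished first vertex to $\infty$.

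First, I would appeal to the traditional normalization described in \cite{12}: for each point of the Teichmuller space one gets a canonical collection of generators $Q_1,\dots,Q_{2g}$ of the Fuchsian group $G\subset\Aut D$ satisfying conditions 1)--3), and an associated fundamental polygon $U$ with $4g$ vertices $x_0^1,\dots,x_0^{4g}$, all depending smoothly (though not holomorphically) on the Teichmüller parameter. In particular, the first vertex $x_0^1\in D$ varies smoothly with the point in~$T$.

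Next, I would construct a canonical automorphism $Q_{x_0^1}\in\Aut D$ sending $x_0^1$ to $\infty$. Since $\Aut D$ is three-real-dimensional, the requirement $Q_{x_0^1}(x_0^1)=\infty$ pins down two of the three parameters; the remaining one-parameter family of rotations around the image is killed by imposing a second canonical condition, for example demanding that the tangent direction at $x_0^1$ of the edge $(x_0^1,x_0^2)$ be mapped to a fixed direction at $\infty$. This choice is smooth in the Teichmüller point because both $x_0^1$ and the tangent direction are. Then I would take as new generators $K_j:=Q_{x_0^1}Q_jQ_{x_0^1}^{-1}$, $j=1,\dots,2g$. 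Conjugation preserves the commutator relation of property 1), the property of being Fuchsian in 2), and carries the fundamental domain $U$ to $Q_{x_0^1}(U)$, still disjoint from $\partial D$, so condition 3) also survives; the two collections $(Q_j)$ and $(K_j)$ are conjugate and hence represent the same point in the Teichmuller space.

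Finally, by construction the polygon $Q_{x_0^1}(U)$ is a fundamental domain for the group generated by the $K_j$, and its first vertex is $Q_{x_0^1}(x_0^1)=\infty$, independent of the Teichmüller point. Smoothness of all other vertices follows because each is the image, under the smoothly varying automorphism $Q_{x_0^1}$, of the smoothly varying vertex $x_0^i$ of~$U$. The main (and really the only non-trivial) obstacle is to verify that $Q_{x_0^1}$ can indeed be pinned down by a fully canonical extra condition that varies smoothly with the Teichmüller parameter; once such a condition is exhibited, the rest of the argument is a direct conjugation computation.
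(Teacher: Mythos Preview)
Your proposal is correct and follows essentially the same two-step argument as the paper: start from the classical normalization of~\cite{12} to get a smoothly varying fundamental polygon, then conjugate by a canonically chosen $Q_{x_0^1}\in\Aut D$ sending the first vertex to~$\infty$. You are actually more explicit than the paper on the one point it leaves vague---how to kill the residual rotational freedom in $Q_{x_0^1}$---by proposing to normalize the tangent direction of the first edge; the paper simply asserts that $Q_z$ can be chosen ``canonically'' without specifying how.
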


This polygon is denoted below as~$U$, and its vertices as~$x_0^i$,
$i=1,\dots,4g$, one has $x_0^1=\infty$. The marked points
$(z,a_1,\dots,a_n)$  become points in~$U$.

When we are studying  deformations in the section~\ref{sec4} we do
not allow the singularities to cross the canonical cuts, this is not
essential since we are considering only local deformations.

\section{Description of bundles with connections on a Riemann surface}
\label{sec3}

Let $E$~be a bundle on a surface~$M$ and $\nabla$~a connection
in~$E$ with singularities in $a_1,\dots,a_n\in M$. Suppose that an
initial point $x_0\in M$ is nonsingular. In the present section for
a pair $(E,\nabla)$ on a Riemann surface we construct a form on a
Riemann sphere and some matrices.  From them one can reconstruct a
pair $(E,\nabla)$ on a surface.  The construction of the form is
noncanonical since at some moment we fix a trivialization of some
bundle.

\subsection{The construction of the form on a Riemann sphere}
\label{sec3.1} Construct a form on a fundamental polygon.  We have
presented a marked Riemann surface as a factor of the fundamental
polygon~$U$. Let $(E_U,\nabla_U)$~be an inverse image on~$U$ of the
pair $(E,\nabla)$ under the factorization. Continue the pair
$(E_U,\nabla_U)$ until the pair
$(E_{\overline{\mathbb{C}}},\nabla_{\overline{\mathbb{C}}})$ on the
whole Riemann sphere. To do it let us calculate a monodromy of the
connection~$\nabla_U$ corresponding to the bypass along
$\gamma=\partial U$.  One can easily see that the monodromy equals
$M_{\gamma}=M_{a_1}\dots M_{a_n}$. We denote the monodromy of
~$\nabla$ corresponding to the bypass along~$\gamma$
as~$M_{\gamma}$, and the monodromy corresponding to the bypass
around $a_i$~as~$M_{a_i}$, $i=1,\dots,n$.

Take in the domain $\overline{\mathbb{C}}\setminus U$ a trivial
bundle~$E'$. Take as ~$\nabla'$  a connection with the only
singularity and the monodromy  ~$M_{\gamma}$ of the bypass around
zero. Thus on the boundary~$\partial U$ the bundles~$E_U$ and~$E'$
are trivial and connections~$\nabla_U$ and~$\nabla'$ in them have
the same monodromy.  From here we conclude that we can glue pairs
$(E_U,\nabla_U)$ and $(E',\nabla')$ into a pair
$(E_{\overline{\mathbb{C}}},\nabla_{\overline{\mathbb{C}}})$ on the
whole Riemann sphere.  It can be obtained by gluing them over
horizontal sections over~$\partial U$. Let us describe this
procedure of gluing since we shall use it several  times.

\begin{proposition}
\label{pr2} Let $V$~be a domain on  a Riemann surface and  $\gamma$~
be a nonclosed curve without self intersections that cuts the domain
into two parts~$V'$ and ~$V''$.  Let us be given two pairs:
$(E',\nabla')$ on~$V'$ and $(E'',\nabla'')$ on $V''$ without
singularities on~$\gamma$. Fix an identifications $E'_P=E''_P$ over
some point $P\in\gamma$. Then there exists a uniquely defined
procedure of gluing of pairs $(E',\nabla')$ and $(E'',\nabla'')$
into a pair $(E,\nabla)$ on $V$. If a curve is closed, then the
gluing is possible if and only if the monodromies of ~$\nabla'$
and~$\nabla''$ along~$\gamma$ are the same.
\end{proposition}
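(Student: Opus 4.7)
The plan is to build the gluing from horizontal frames near $\gamma$, using the absence of singularities on $\gamma$ to ensure such frames exist, and then to verify that the monodromy condition in the closed case is exactly what is needed for consistency.

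First I would choose a tubular neighborhood $W$ of $\gamma$ inside $V$ on which neither $\nabla'$ nor $\nabla''$ has singularities. Since $\gamma$ has no self-intersections and is non-closed, $W$ can be taken simply connected, so the restrictions of $(E',\nabla')$ to $W\cap V'$ and $(E'',\nabla'')$ to $W\cap V''$ each admit a full horizontal frame, i.e.\ a basis of horizontal sections; these frames are uniquely determined once their values at $P$ are fixed. Using the identification $E'_P=E''_P$, I would declare the corresponding horizontal frames on the two sides of $\gamma$ to be identified pointwise along $\gamma$. This gives a canonical transition: a section of $E'$ near $\gamma$ is identified with a section of $E''$ iff they have the same coefficients in the chosen horizontal frames. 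Gluing $(E',\nabla')$ and $(E'',\nabla'')$ via this transition produces a bundle $E$ on $V$ whose transition matrix (in horizontal frames) is the identity, so the two connections match and define a single connection $\nabla$ on $E$. Uniqueness of the construction follows from uniqueness of horizontal extension of the identification at $P$ along the simply connected $\gamma$.

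For the closed-curve case, the same construction works in a tubular neighborhood $W$ of $\gamma$, but now $W$ is an annulus and not simply connected. Starting from the identification at $P$ and transporting horizontally along $\gamma$, after one full loop the horizontal frame of $E'$ returns to itself multiplied by $M_{\gamma}(\nabla')$, while that of $E''$ returns multiplied by $M_{\gamma}(\nabla'')$. For the two sides to glue to a globally defined bundle with a well-defined connection, the two monodromies must coincide; conversely, if they do, the transport gives a single-valued identification on $\gamma$ and the gluing above goes through verbatim.

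The only delicate point, and the one I would handle most carefully, is the closed case: one must check that equality of monodromies does yield a well-defined transition function on the whole annular neighborhood, not merely on $\gamma$. This follows because horizontal frames exist locally on any simply connected piece of $W$, and the identification along $\gamma$ extends, by horizontal transport transversely to $\gamma$, to a local isomorphism of $(E',\nabla')$ and $(E'',\nabla'')$ near $\gamma$; the matching of monodromies is exactly what guarantees that this local isomorphism, computed from either side of a chosen cut of the annulus, does not pick up a discrepancy. Once this is in place, the existence and uniqueness of $(E,\nabla)$ follow as in the non-closed case.
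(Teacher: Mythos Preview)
Your argument is correct and is precisely the standard one: propagate the identification at $P$ by horizontal transport in a tubular neighborhood of $\gamma$, and in the closed case observe that the only obstruction to single-valuedness is the discrepancy of the two monodromies along $\gamma$. The paper itself does not give a proof of this proposition at all---it simply declares the statement well known and omits the argument---so there is nothing to compare against; your write-up supplies exactly the kind of proof the author is implicitly invoking.
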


\begin{proof}[\textup{of the proposition is well-know,  we omit it}]
\end{proof}

Every bundle on a Riemann sphere is meromorphically trivial.
Moreover there exists a meromorphic trivialization, which is
holomorphic on $\overline{\mathbb{C}}\setminus\{0\}$. Fix such a
trivialization. The sections become holomorphic vector-columns. The
connection~$\nabla_{\mathbb{C}}$ can be defined using a form
\begin{equation}
\omega=\biggl(\frac{C_k}{z^k}+\dots+\frac{C_1}{z}+
\sum_i\frac{B_i}{z-a_i}\biggr)\,dz \label{eq1}
\end{equation}
with a regular singularity in zero.

\begin{definition}
\label{def4} Let $(E_U,\nabla_U)$ be the inverse image of the pair
$(E,\nabla)$ under the factorization $U\to M$. Then~$\omega$ is the
form of the connection~$\nabla_U$ in such a trivialization of~$E_U$.
It is of the type~\eqref{eq1} with a regular singularity in zero.
\end{definition}

Note that such a trivialization is not unique.

\begin{remark}
For a typical  monodromy matrices and typical positions of
singularities one can take such a trivialization that the form is
written as
\begin{equation}
\omega=\biggl(\frac{C_1}{z}+
\sum_{i=1}^{n}\frac{B_i}{z-a_i}\biggr)\,dz. \label{eq2}
\end{equation}
In this situation $C_1=-\sum_{i=1}^nB_i$. If we put  $a_0=0$
and~$B_0=-\sum_{i=1}^n B_i$, then
\begin{equation*}
\omega=\sum_{i=0}^{n}\frac{B_i}{z-a_i}\,dz.
\end{equation*}
\end{remark}

\subsection{The construction of the matrices of gluing operators $S_{x_0^1x_0^i}$}
\label{sec3.2} Introduce additional objects. Using them and the
form~$\omega$ one can reconstruct a pair $(E,\nabla)$ on a Riemann
surface. To reconstruct a bundle on a Riemann surface we need
operators $S_{z,z'}\colon E_{U,z}\mapsto E_{U,z'}$ from a stalk of
the bundle~$E_U$ over a point~$z$ to the stalk of~$E_U$ over a
point~$z'$. These operators  are defined in the following way for
every ordered pair of points $z,z'\in\partial U$, that are glued
under the factorization $U\to M$.

\begin{definition}
\label{def6} The bundle~$E_U$ is an inverse image under the
factorization of the bundle~$E$ on~$M$, hence there exist
isomorphisms of stalks $E_{U z}\to E_{Z}$ and $E_{Uz'}\to E_{Z}$.
Define~$S_{z,z'}$ as $E_{U,z}\to E_{Z}\to E_{U,z'}$, where the
second mapping is the inverse to ~$E_{U,z'}\to E_{Z}$.
\end{definition}

But it is excess to know all operators $S_{z,z'}$. Let
$x^1_0,\dots,x^{4g}_0$~be the vertices of the fundamental polygon.
Below we show that it is sufficient to know only the
operators~$S_{x^1_0,x^i_0}$.  Since the trivialization of the
bundle~$E_U$ is fixed, we speak below about the
matrices~$S_{x^1_0,x^i_0}$.

\begin{definition}
\label{def7} The matrices $S_{x^i_0,x^j_0}$ are matrices of
operators, that glue the stalks over points  $x_0^i$,~$x_0^j$ in the
sense of the definition~\ref{def6}.
\end{definition}

Thus,  for the bundle with a connection on a Riemann surface~$M$ and
an initial point~$x_0$ we have constructed~$\omega$ of
type~\eqref{eq1} with singularities $a_i\in U$, $i=1,\dots,n$, and a
regular singularity at zero, and matrices $S_{x^1_0,x^i_0}$,
$i=1,\dots,4g$.

\subsection{The reconstruction of a bundle with a connection from the form~$\omega$ and matrices~$S_{x^1_0,x^i_0}$} \label{sec3.3}
At first we suppose that we are given a form and matrices that are
obtained from a pair on a Riemann surface and consider the procedure
of the reconstruction  of a pair on a Riemann surface. Then we
investigate the question when such a reconstruction is possible.

At first we construct a pair $(E_U,\nabla_U)$ on the fundamental
polygon: a bundle~$E_U$ is a trivial bundle $U\times\mathbb{C}^p$,
and $\nabla_U$~is a connection in it, defined by the form ~$\omega$.
Reconstruct operators~$S_{z,z'}$ for every pair of points  $z,z'\in
\partial U$ that are glued under the factorization in the surface.
The points $z,z'\in\partial U$ belong to edges $x^i_0 x^{i+1}_0$
and $x^{j+1}_0 x^j_0$ that are glued (the order means that the edges
with the opposite factorization).

\begin{proposition}
\label{pr3} Let $Y_1$~be a matrix, whose columns are horizontal
sections of the bundle~$E_U$ over the edge $x^i_0 x^{i+1}_0$ with
the initial condition $Y_1(x_0^i)=E$. Let $Y_2 $~-be a matrix, whose
columns are horizontal sections of~$E_U$ over the edge $x^{j+1}_0
x^j_0$ with the initial condition
$Y_2(x_0^{j+1})=S_{x_0^ix_0^{j+1}}=S_{x_0^1x_0^{i}}^{-1}S_{x_0^1x_0^{j+1}}$.
Then $S_{z,z'}=Y_2(z')Y_1(z)^{-1}$.
\end{proposition}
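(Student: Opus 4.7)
The plan is to exploit the fact that $E_U = \pi^*E$ is a pullback under the factorization $\pi\colon U\to M$, so horizontal sections of $\nabla_U$ are precisely the pullbacks of horizontal sections of $\nabla$. From this I want to extract an intertwining relation between the gluing operators $S_{z,z'}$ and parallel transport along the identified edges, and then unwind the resulting identity in the fixed trivialization.

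First I would describe parallel transport in the trivialization. Since $\nabla_U$ has form $\omega$ in the chosen trivialization and $Y_1$, $Y_2$ consist of horizontal sections along the respective edges, the parallel transport operators are
\begin{equation*}
P_{x_0^i\to z} = Y_1(z)Y_1(x_0^i)^{-1} = Y_1(z), \qquad
P_{x_0^{j+1}\to z'} = Y_2(z')Y_2(x_0^{j+1})^{-1} = Y_2(z')\,S_{x_0^i,x_0^{j+1}}^{-1},
\end{equation*}
using the prescribed initial conditions $Y_1(x_0^i)=E$ and $Y_2(x_0^{j+1})=S_{x_0^i,x_0^{j+1}}$.

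Second, I would establish the key compatibility: for any glued pair $(z,z')\in\partial U$ on the two edges, the diagram
\begin{equation*}
\begin{array}{ccc}
E_{U,x_0^i} & \xrightarrow{\;P_{x_0^i\to z}\;} & E_{U,z} \\
\downarrow S_{x_0^i,x_0^{j+1}} & & \downarrow S_{z,z'} \\
E_{U,x_0^{j+1}} & \xrightarrow{\;P_{x_0^{j+1}\to z'}\;} & E_{U,z'}
\end{array}
\end{equation*}
commutes. This is essentially tautological: by Definition~\ref{def6} all vertical arrows factor through the common stalk on $M$, and parallel transport in $E_U$ along each of the two edges descends to parallel transport along the same curve on $M$ (namely the canonical cut $\pi(x_0^ix_0^{i+1})=\pi(x_0^{j+1}x_0^j)$) through the identification $E_U=\pi^*E$. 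In particular, a horizontal section of $\nabla$ along that curve pulls back to horizontal sections along both edges, which are matched precisely by the $S_{z,z'}$.

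Third, I would solve for $S_{z,z'}$ from this commutative diagram and substitute the explicit expressions from step one:
\begin{equation*}
S_{z,z'} = P_{x_0^{j+1}\to z'}\circ S_{x_0^i,x_0^{j+1}}\circ P_{x_0^i\to z}^{-1}
= Y_2(z')\,S_{x_0^i,x_0^{j+1}}^{-1}\,S_{x_0^i,x_0^{j+1}}\,Y_1(z)^{-1}
= Y_2(z')Y_1(z)^{-1}.
\end{equation*}
The remaining equality $S_{x_0^i,x_0^{j+1}}=S_{x_0^1,x_0^i}^{-1}S_{x_0^1,x_0^{j+1}}$ is merely the cocycle identity, since all vertices $x_0^k$ are glued to a single point $Z\in M$ and each $S_{x_0^a,x_0^b}$ is the composition $E_{U,x_0^a}\to E_Z\to E_{U,x_0^b}$.

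I do not expect any real obstacle here; the content is essentially a chase of identifications, and the only care needed is sign/order conventions for parallel transport so that the inverses line up correctly with the chosen initial values of $Y_1$ and $Y_2$.
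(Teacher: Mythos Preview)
Your proposal is correct and follows essentially the same approach as the paper. The paper's proof pushes $Y_1$ and $Y_2$ down to horizontal sections over the common cut on $M$, observes that their initial values agree (by the choice $Y_2(x_0^{j+1})=S_{x_0^i,x_0^{j+1}}Y_1(x_0^i)$), and concludes $S_{z,z'}Y_1(z)=Y_2(z')$; your commutative diagram with parallel transport is exactly this statement phrased slightly more categorically.
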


\begin{proof}
Since the edges $x^i_0 x^{i+1}_0$ and $x^{j+1}_0 x^j_0$ are glued
into one cut, the stalks of~$E_U$ over points of these edges must be
glued into stalks of~$E$. The matrices~$Y_1$ and~$Y_2$  are
transformed into two collections of horizontal sections over this
cut. The initial conditions for these horizontal sections coincide,
since the collection of sections $Y_1(x_0^i)$ of the bundle~$E_U$
over the point~$x_0^i$ is identified with the collection of sections
$S_{x_0^ix_0^{j+1}}Y_1(x_0^i)=Y_2(x_0^{j+1})$ over the
point~$x_0^{j+1}$. Then the collections of sections~$Y_1$ and~$Y_2$
must be glues together over the whole cut.  It follows that if the
points $z\in x^i_0x^{i+1}_0$ and $z'\in x^{j+1}_0x^j_0$ are glued
under the factorization, then $S_{z,z'}Y_1(z)=Y_2(z')$. The
proposition is proved.
\end{proof}

The total space of the bundle~$E$ is obtained from the total
space~$E_U$ in the following way: if the points $z,z'\in\partial U$
are glued under the factorization into the Riemann surface then, we
glue the stalks~$E_{U,z}$ and~$E_{U,z'}$ using the
operator~$S_{z,z'}$.

The connection $\nabla$ in~$E$ is reconstructed automatically.
Indeed, $\operatorname{int}U$ is mapped biholomorphicly onto some
open dense subspace ~$U'$ in~$M$. Hence the connection~$\nabla_U$
uniquely defines a connection~$\nabla$ in $E|_{U'}$. Since the
set~$U'$ is dense and in the set $\partial U'$ there are no
singularities, the connection is uniquely defined  on the whole
surface.

\goodbreak

We have proved the following statement.

\begin{proposition}
\label{pr4} From a form~$\omega$ of type~\eqref{eq1}  with
singularities in the fundamental polygon~$U$ and matrices
$S_{x^1_0,x^i_0}$, $i=2,\dots,4g$, obtained from a pair $(E,\nabla)$
on a  surface, the pair $(E,\nabla)$ is reconstructed as follows:

1) we construct a pair $(E_U,\nabla_U)$~-which is a trivial bundle
on~$U$ with a connection defined by the form $\omega${\rm;}

2) using the rule described in the proposition~\ref{pr3} using the
matrices $S_{x^1_0,x^i_0}$ we reconstruct the matrices~$S_{z,z'}$
for all pairs of points $z,z'\in\partial U$, that are glued under
the factorization;

3) the total space~$E$ is obtained from the total space of~$E_U$
using the following rule: if $z,z'\in \partial U$ are glued together
under the factorization into the Riemann surface, then we glue the
stalks~$E_{U,z}$ and~$E_{U,z'}$ using the operators~$S_{z,z'}$;

4) the connection $\nabla$ in~$E$ is reconstructed automatically.
\end{proposition}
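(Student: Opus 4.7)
The plan is to verify the four-step recipe by working outward from the interior of the polygon, showing at each step that the construction is forced by the data and is compatible where different pieces meet. Nothing in the statement is genuinely new: the content lies in checking that the gluing prescriptions are consistent, so the proof is really a bookkeeping check.

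First I would handle step 1 and confirm that the pair $(E_U,\nabla_U)$ produced as a trivial bundle on $U$ equipped with the connection given by $\omega$ really is (isomorphic to) the inverse image of $(E,\nabla)$ under the factorizing map $U\to M$. This is immediate from Definition~\ref{def4}, once one remembers that the chosen meromorphic trivialization of $E_{\overline{\mathbb{C}}}$ is holomorphic on $\overline{\mathbb{C}}\setminus\{0\}$, so in particular on $U$, so $\omega|_U$ is exactly the connection form of $\nabla_U$ in this trivialization.

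Second, for step 2 I would invoke Proposition~\ref{pr3} directly: given the matrices $S_{x_0^1,x_0^i}$ for $i=2,\dots,4g$, form $S_{x_0^i,x_0^{j+1}}=S_{x_0^1,x_0^i}^{-1}S_{x_0^1,x_0^{j+1}}$ at each pair of glued vertices, solve the horizontal-section equation on each pair of glued edges with the initial conditions prescribed in Proposition~\ref{pr3}, and read off $S_{z,z'}$ from the formula $S_{z,z'}=Y_2(z')Y_1(z)^{-1}$. The cocycle identity for $S_{z,z'}$ on each cut is automatic because both $Y_1$ and $Y_2$ are fundamental matrices.

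Third, for step 3 I would apply Proposition~\ref{pr2} cut-by-cut: each edge of $\partial U$ is a non-closed curve along which two pieces of trivial bundle with connection are being glued via the operators $S_{z,z'}$ produced in step 2, so Proposition~\ref{pr2} yields a bundle with connection in a neighborhood of the interior of that cut. Doing this for each of the $2g$ pairs of glued edges produces a bundle with connection over $M$ minus the $4g$ vertex-images, which is a single point. The nontrivial compatibility is at this common vertex: one has to check that the composite of the $S_{x_0^i,x_0^{i+1}}$ going around all $4g$ vertices yields the identity on the stalk, and this is exactly what the consistent definition $S_{x_0^i,x_0^j}=S_{x_0^1,x_0^i}^{-1}S_{x_0^1,x_0^j}$ guarantees; this is the main obstacle, and it is dispatched simply because we have assumed the data come from an actual pair $(E,\nabla)$, so all the vertex stalks are canonically identified with a single stalk $E_{x_0}$. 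Finally, step 4 is immediate: $\operatorname{int} U$ maps biholomorphically onto an open dense subset $U'\subset M$ on which $\nabla_U$ transports to a connection, and since the constructed bundle on $M$ has no singularities on $M\setminus\{a_1,\dots,a_n\}$, this connection extends uniquely across the cuts by continuity, completing the reconstruction.
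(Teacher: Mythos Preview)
Your proposal is correct and follows essentially the same approach as the paper: the paper's proof of Proposition~\ref{pr4} is precisely the discussion immediately preceding it, which constructs $(E_U,\nabla_U)$ from $\omega$, invokes Proposition~\ref{pr3} to recover all $S_{z,z'}$, glues stalks over identified boundary points, and observes that $\nabla$ extends from the dense open image of $\operatorname{int}U$ since there are no singularities on the cuts. Your slightly more explicit treatment of the vertex compatibility in step~3 is harmless extra detail; as you correctly note, it is automatic here because the data are assumed to come from an actual pair $(E,\nabla)$, and the genuine compatibility question for arbitrary data is deferred to Lemma~\ref{lem3}.
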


Express the monodromy of~$\nabla$ along the cuts using the
form~$\omega$and matrices~$S_{x^1_0,x^i_0}$. Denote by
$(E_U,\nabla_U)$ a trivial bundle on~$U$ with a connection defined
by~$\omega$.

\begin{definition}
\label{def8} Take in a stalk~$E_{U,x_0^1}$ over the point~$x_0^1$ a
base $e^1_1,\dots,e^1_p$. Then in the stalk~$E_{U,x_0^i}$ over a
point~$x^0_i$ we obtain a base
$S_{x^1_0,x^i_0}e^1_1,\dots,S_{x^1_0,x^i_0}e^1_p$. Such  a system of
bases in the stalks~$E_{U,x_0^i}$, $i=1,\dots,4g$ we call a coherent
system of bases.
\end{definition}

When we identify $E_{U,x_0^i}=E_{x^0}$, all these bases are
identified with one base in  $E_{x^0}$, we denote it
$e_1,\dots,e_p$.

Take as a base in $E_{U,x_0^1}$ a standard base
\begin{equation*}
e_1^1=(1,0,\dots,0),\qquad\dots,\qquad e_p^1=(0,0,\dots,1).
\end{equation*}
Take a coherent system of bases in $E_{U,x_0^i}$, $i=1,\dots,4g$.
This gives as a base $e_1,\dots,e_p$ in ~$E_{x^0}$. Find the
monodromy matrices in this base.

\begin{lemma}
\label{lem1} Consider horizontal sections $y_1,\dots,y_p$ starting
at the point $x^1_0$ then going along the curve $x^1_0x^2_0\dots
x^{i-1}_0x^{i}_0$ with the initial condition $y_k(x_0^1)=e^1_k$,
$k=1,\dots,p$. Write then in a matrix $Y=(y_1,\dots,y_p)$. Then the
monodormy matrix corresponding to the bypass along the curve,
obtained from $x^1_0x^2_0\dots x^{i-1}_0x^{i}_0$ after the
factorization, equals $S^{-1}_{x^{1}_0,x^{i}_0}Y(x_0^{i})$.
\end{lemma}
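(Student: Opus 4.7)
The plan is to reduce the claim to a straightforward change-of-basis computation after identifying what happens under the factorization $U\to M$. First I would note that, since all vertices $x_0^1,\dots,x_0^{4g}$ of the fundamental polygon are glued to a single point $x^0\in M$, the polygonal path $x_0^1 x_0^2\dots x_0^{i-1} x_0^i$ in $U$ descends to a closed loop $\gamma$ based at $x^0$, so the phrase ``monodromy along the curve obtained after factorization'' is meaningful. Moreover, because $(E_U,\nabla_U)$ is the pullback of $(E,\nabla)$ under the quotient map, any horizontal section of $\nabla_U$ along the path descends to a horizontal section of $\nabla$ along $\gamma$. In particular, the columns of the matrix $Y$, which start at $x_0^1$ with the vectors $e_1^1,\dots,e_p^1$, arrive at $x_0^i$ with values forming the columns of $Y(x_0^i)$, expressed in the standard basis of the trivialized fibre $E_{U,x_0^i}$.

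Next, I would recall that by Definition~\ref{def8} the monodromy matrix being computed is expressed in the basis $e_1,\dots,e_p$ of $E_{x^0}$ which is obtained by identifying, across all vertices, the coherent family of bases $\{S_{x_0^1,x_0^i}e_k^1\}_{k=1}^p$ in the stalks $E_{U,x_0^i}$. Under the isomorphism $E_{U,x_0^i}\to E_{x^0}$ of Definition~\ref{def6}, the vector $S_{x_0^1,x_0^i}e_k^1$ is sent precisely to $e_k$. Therefore the required monodromy matrix is the matrix whose $j$-th column consists of the coefficients expressing the $j$-th column of $Y(x_0^i)$ in the coherent basis $\{S_{x_0^1,x_0^i}e_k^1\}$.

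The remaining step is a one-line linear-algebra computation: if the columns of $Y(x_0^i)$ are written in the standard basis of $E_{U,x_0^i}$ and must be rewritten in the basis obtained by applying $S_{x_0^1,x_0^i}$ to the standard basis, then the matrix of coordinates in the new basis is $S_{x_0^1,x_0^i}^{-1}Y(x_0^i)$, which is exactly the claimed expression for the monodromy.

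I do not anticipate a genuine obstacle here; the only point that requires care is the direction of the change of basis (so that $S^{-1}$, and not $S$, appears) and the consistent use of the identifications from Definitions~\ref{def6} and~\ref{def8}. Once these are spelled out, the lemma is essentially immediate from the fact that horizontal transport of $\nabla_U$ along the polygonal path computes, after factorization, the holonomy of $\nabla$ along~$\gamma$.
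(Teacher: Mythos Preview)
Your proposal is correct and follows essentially the same approach as the paper: both arguments identify the monodromy as the composition of horizontal transport along the polygonal path (giving $Y(x_0^i)$) with the stalk identification $S_{x_0^1,x_0^i}^{-1}$ coming from Definition~\ref{def6}. You are simply more explicit about the change-of-basis computation, while the paper phrases the same step as a composition of operators; the content is identical.
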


\begin{proof}
Indeed, $S^{-1}_{x^{1}_0,x^{i}_0}Y(x_0^{i})$~is a matrix in a base
$y_k(x_0^1)=e_k^1$, $k=1,\dots,p$,  of the operator that firstly
does the horizontal transportation of sections along the curve from
the stalk over~$x_0^1$to the stalk over~$x_0^i$, and then identifies
the stalks as under the factorization into~$E$.  By definition this
matrix is the monodromy matrix in the base $e_1,\dots,e_p$ along the
curve that we obtain under the factorization $x^1_0x^2_0\dots
x^{i-1}_0x^{i}_0$. The lemma is proved.
\end{proof}

Below, when we say ``the monodromy of the bypass along the loop
$x^1_0x^2_0\dots x^{i-1}_0x^{i}_0$'',  we shall mean the monodromy
of the bypass along the loop that we obtain from $x^1_0x^2_0\dots
x^{i-1}_0x^{i}_0$ under the factorization.

If we know monodromies of the bypasses along all loops
$x^1_0x^2_0\dots x^{i-1}_0x^{i}_0$, then we know the monodromy of
the bypass along each loop $x_0^jx_0^{j+1}$.  Thus we shall write
the condition that the monodromy is preserved we shall write the
condition that the monodromy of the bypasses along all loops
$x^1_0x^2_0\dots x^{i-1}_0x^{i}_0$ is preserved.  Below we need also
an expression for the monodromy of the bypass along the loop
$x_0^{i}x_0^{i+1}$.

\begin{lemma}
\label{lem2} The monodromy matrix of $\nabla$ of the bypass along
the loop that we get from $x_0^{i}x_0^{i+1}$ after the factorization
is written as follows: take horizontal sections
$\tilde{y}_1,\dots,\tilde{y}_p$ of the pair $(E_U,\nabla_U)$
starting from~$x^i_0$ going along $x_0^ix_0^{i+1}$ such that
$\tilde{y}_k(x_0^i)=e_k^i$, $k=1,\dots,p$, write them in a matrix
$\widetilde{Y}=(\tilde{y}_1,\dots,\tilde{y}_p)$ (note that
$\widetilde{Y}(x_0^i)=S_{x_0^1x_0^i}$). Then the monodromy along
$x_0^{i}x_0^{i+1}$ equals
$\widetilde{Y}^{-1}(x_0^{i})S^{-1}_{x^{i}_0,x^{i+1}_0}
\widetilde{Y}(x_0^{i+1})=
S_{x_0^1x_0^{i+1}}^{-1}\widetilde{Y}(x_0^{i+1})$.
\end{lemma}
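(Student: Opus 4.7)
The plan is to mirror closely the proof of Lemma~\ref{lem1}, adapting it to a transport that starts at the vertex $x_0^i$ rather than at $x_0^1$. First I would verify the parenthetical remark: by the definition of a coherent system of bases (Definition~\ref{def8}), the vectors $e_1^i,\dots,e_p^i\in E_{U,x_0^i}$ are $S_{x_0^1,x_0^i}e_1^1,\dots,S_{x_0^1,x_0^i}e_p^1$, and since $e_k^1$ is the standard basis in the fixed trivialization of $E_U$, these vectors appear in the trivialization as the columns of $S_{x_0^1,x_0^i}$. The horizontal sections $\tilde{y}_k$ with $\tilde{y}_k(x_0^i)=e_k^i$ therefore satisfy $\widetilde{Y}(x_0^i)=S_{x_0^1,x_0^i}$.

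Next I would compute the monodromy by composing three maps, reading everything in the fixed trivialization. Denote by $\pi_z\colon E_{U,z}\to E_{x_0}$ the natural identification of stalks coming from the factorization, so that $S_{z,z'}=\pi_{z'}^{-1}\pi_z$. A vector $v\in E_{x_0}$, written in the basis $e_1,\dots,e_p$, is first identified with a vector of $E_{U,x_0^i}$ via $\pi_{x_0^i}^{-1}$, which in the trivialization amounts to left multiplication by $\widetilde{Y}(x_0^i)=S_{x_0^1,x_0^i}$. It is then transported horizontally from $x_0^i$ to $x_0^{i+1}$ along the edge; since $\widetilde{Y}$ is a fundamental matrix solution for $\nabla_U$, this corresponds to multiplication by $\widetilde{Y}(x_0^{i+1})\widetilde{Y}(x_0^i)^{-1}$. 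Finally the stalk $E_{U,x_0^{i+1}}$ is identified with $E_{x_0}$ via $\pi_{x_0^{i+1}}$; since the coherent basis $e_k^{i+1}$ sits in the trivialization as the columns of $S_{x_0^1,x_0^{i+1}}$, this last step is left multiplication by $S_{x_0^1,x_0^{i+1}}^{-1}$. Composing the three pieces and cancelling the two occurrences of $S_{x_0^1,x_0^i}$ yields exactly $S_{x_0^1,x_0^{i+1}}^{-1}\widetilde{Y}(x_0^{i+1})$, which is the second expression in the statement.

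To recover the first expression I would apply the composition identity $S_{x_0^1,x_0^{i+1}}=S_{x_0^i,x_0^{i+1}}\circ S_{x_0^1,x_0^i}$, which is immediate from $S_{z,z'}=\pi_{z'}^{-1}\pi_z$ (this is the composition law already used tacitly in Proposition~\ref{pr3}). Inverting gives $S_{x_0^1,x_0^{i+1}}^{-1}=S_{x_0^1,x_0^i}^{-1}S_{x_0^i,x_0^{i+1}}^{-1}=\widetilde{Y}(x_0^i)^{-1}S_{x_0^i,x_0^{i+1}}^{-1}$, and substituting into the formula above converts it into $\widetilde{Y}^{-1}(x_0^i)S_{x_0^i,x_0^{i+1}}^{-1}\widetilde{Y}(x_0^{i+1})$, as required.

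I do not anticipate any substantive obstacle: the argument is entirely bookkeeping in the trivialization of $E_U$. The only delicate point is keeping the order of the operators $\pi_z$ (equivalently, the correct composition rule for $S_{z,z'}$) consistent, and this is the same subtlety that is already handled in Proposition~\ref{pr3} and Lemma~\ref{lem1}, from which the present lemma inherits all its structure.
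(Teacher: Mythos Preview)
Your proposal is correct and takes essentially the same approach as the paper, which simply says the proof ``is analogous to the proof of the previous lemma.'' You have carried out that analogy explicitly and correctly, including the verification of the parenthetical remark and the derivation of both displayed expressions via the composition law for the~$S_{z,z'}$.
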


\begin{proof}[\textup{is analogous to the proof of the previous lemma}]
\end{proof}

Up to now we have suggested that the form~$\omega$ and matrices
$S_{x^1_0,x^i_0}$ are obtained from a pair $(E,\nabla)$ on a Riemann
surface. Let us now give an answer to the following natural
question. Let us be given a Riemann surface presented as a factor of
the exterior of the unit disc. Let us be given a form~\eqref{eq1} on
the Riemann sphere,  such that all its singularities, except may be
zero,  belong to the fundamental polygon~$U$, let us be given a
collection of nondegenerate matrices~$S_{x^1_0,x^i_0}$,
$i=1,\dots,4g$. In the Proposition~\ref{pr4}  we have described a
procedure how to reconstruct a bundle with a connection on the
Riemann surface from this data. The question is for which data this
procedure is correct?

\begin{lemma}
\label{lem3} A necessary and sufficient condition for the
possibility of reconstruction of $(E,\nabla)$ using the procedure
from the proposition~\ref{pr4} is the following: let
$x_0^ix_0^{i+1}$ and $x_0^{j+1} x_0^j$ be glued into one canonical
cut on the Riemann surface. Then
\begin{equation}
Y_1^{-1}(x_0^{i})S^{-1}_{x^{i}_0,x^{i+1}_0}Y_1(x_0^{i+1})=
Y_2^{-1}(x_0^{j+1})S^{-1}_{x_0^{j+1}\!,\,x^j_0}Y_2(x_0^j),
\label{eq3}
\end{equation}
where $Y_1$~is a matrix whose columns are horizontal sections of the
pair $(E_U,\nabla_U)$ along $x_0^ix_0^{i+1}$ with the initial
condition $Y_1(x_0^i)=S_{x_0^1x_0^i}$, and $Y_2$~is a matrix whose
columns are horizontal sections of the pair $(E_U,\nabla_U)$ along
$x_0^{j+1}x_0^j$ with the initial condition
$Y_2(x_0^{j+1})=S_{x_0^1x_0^{j+1}}$.
\end{lemma}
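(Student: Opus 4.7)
The plan is to interpret condition~\eqref{eq3} as the consistency requirement for the gluing constructed in Proposition~\ref{pr4}. That procedure builds a trivial bundle $E_U$ on $U$ with connection defined by $\omega$, and then identifies stalks over paired boundary points through the operators $S_{z,z'}$ from Proposition~\ref{pr3}. The only way the procedure can fail is if these operators are not well-defined as $z$ and $z'$ range over a pair of glued edges; once they are well-defined, the bundle and connection descend to $M$ by Proposition~\ref{pr2}, since the construction of $S_{z,z'}$ through horizontal sections automatically makes the gluing parallel.

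Fix a pair of glued edges $x_0^ix_0^{i+1}$ and $x_0^{j+1}x_0^j$. Proposition~\ref{pr3} presents $S_{z,z'}$ as $Y_2(z')Y_1(z)^{-1}$, where $Y_1,Y_2$ are fundamental matrices along the two edges determined by their values at a chosen pair of corresponding vertices. Such an $S_{z,z'}$ is parallel along the edge, hence completely determined by its value at any single boundary point. But both pairs of endpoints $x_0^i\sim x_0^{j+1}$ and $x_0^{i+1}\sim x_0^j$ are simultaneously identified under the factorization, giving two candidate starting values, namely $S_{x_0^i,x_0^{j+1}}=S_{x_0^1,x_0^i}^{-1}S_{x_0^1,x_0^{j+1}}$ and $S_{x_0^{i+1},x_0^j}=S_{x_0^1,x_0^{i+1}}^{-1}S_{x_0^1,x_0^j}$. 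Consistency of the gluing is equivalent to the statement that one of these, extended along the edge by parallel transport, coincides with the other at the opposite endpoint.

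Carrying out that parallel extension using the $Y_1$ and $Y_2$ of the lemma (normalised by $Y_1(x_0^i)=S_{x_0^1,x_0^i}$ and $Y_2(x_0^{j+1})=S_{x_0^1,x_0^{j+1}}$) and equating the transported identification at $(x_0^{i+1},x_0^j)$ with $S_{x_0^{i+1},x_0^j}$ reduces, after a short rearrangement using $S_{x_0^i,x_0^{i+1}}=S_{x_0^1,x_0^i}^{-1}S_{x_0^1,x_0^{i+1}}$, precisely to~\eqref{eq3}. An equivalent viewpoint, which makes the necessity transparent, uses Lemma~\ref{lem2}: each edge of the polygon becomes a loop at $x_0$ on $M$ after factorization, and two paired edges represent the same element of $\pi_1(M,x_0)$, so the two monodromy formulas of Lemma~\ref{lem2} along them must coincide, which is again~\eqref{eq3}.

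For sufficiency, assuming~\eqref{eq3} for every pair of glued edges, each $S_{z,z'}$ extends unambiguously along its pair, so the bundle obtained from the quotient is well-defined. Since $S_{z,z'}$ is assembled from horizontal sections, horizontal sections on one side of the cut glue to horizontal sections on the other side, so the connection $\nabla_U$ descends to a connection $\nabla$ in $E$, exactly as in Proposition~\ref{pr2}. The main point requiring care is the index-and-orientation bookkeeping that identifies the consistency equation with the written form~\eqref{eq3}; this is routine but finicky. No additional condition at the common vertex~$x_0$ is needed, because the matrices $S_{x_0^1,x_0^k}$ already encode a single identification $E_{U,x_0^k}\cong E_{x_0}$ for every~$k$.
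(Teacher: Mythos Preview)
Your argument is correct and follows essentially the same route as the paper: necessity is the observation that the two sides of~\eqref{eq3} are the monodromies of the same loop on~$M$ computed from the two edges (your Lemma~\ref{lem2} viewpoint is exactly the paper's necessity proof), and sufficiency is the consistency of the horizontal gluing along each cut. The only organisational difference is that the paper first glues in a small neighbourhood~$O$ of the common vertex~$x_0$ and then invokes the closed-curve case of Proposition~\ref{pr2} on the full cut, whereas you phrase the same check as agreement of the two endpoint normalisations of~$S_{z,z'}$; these are equivalent formulations of the same computation.
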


\begin{proof}
Let us prove that the condition is necessary. Since $x_0^ix_0^{i+1}$
and $x_0^{j+1}x_0^j$ are glued into one cut, the monodromy of the
bypass along this cut can be calculated using $x_0^ix_0^{i+1}$ or
$x_0^{j+1}x_0^j$, but the result must be the same. The monodromy
calculated using $x_0^ix_0^{i+1}$ equals
$Y_1^{-1}(x_0^{i})S^{-1}_{x^{i}_0,x^{i+1}_0}Y_1(x_0^{i+1})$, the
monodromy calculated using $x_0^{j+1}x_0^j$, equals
$Y_2^{-1}(x_0^{j+1})S^{-1}_{x^{j+1}_0,x^j_0}Y_2(x_0^j)$. The
condition~\eqref{eq3} just says that these two expressions are
equal.

Now let us prove that the condition is sufficient.  We can always
construct a pair $(E_U,\nabla_U)$~ which is a trivial bundle with a
connection defined by the form~$\omega$. Since $\operatorname{int}U$
is mapped biholomorphicly onto some open dense subset $U'\subset M$,
we obtain a bundle with a connection on~$U'$.  We need to glue it
into a bundle with a connection on the whole surface.

Take a point $P=x_0$ and consider its small neighborhood ~$O$.  At
the point~$P$ all cuts meet. In the point~$P$ the gluing procedure
is already defined~$S_{x_0^1,x_0^i}$. Using the
proposition~\ref{pr2}, we can glue along the horizontal section
along cuts that are contained in~$O$. As a result we obtain a bundle
with a connection $(E,\nabla)$ over this small neighborhood~$O$,  we
need to glue along the rest parts of the cuts that are not contained
in~$O$. Again we shall glue along the horizontal sections.  The rest
part of a cut is a curve, whose ends~$P_1$ and~$P_2$ belong
to~$\partial O$. Take one of its ends~$P_1$ as an initial point and
do the gluing along the horizontal sections.  We need to check that
this procedure is correct in the end point~$P_2$.

Now we are in the situation described in the Proposition~\ref{pr2}:
we take as~$\gamma$ the whole cut (with its  part that belongs
to~$O$) as $V$~ a small neighborhood of the cut. We have already a
bundle with a connection in~$V$ outside~$\gamma$  and a gluing in
$\gamma\cap O$. The correctedness of gluing along~$\gamma$ is
equivalent to the coincidence of monodromies of glued connections
along~$\gamma$. But the condition~\eqref{eq3} just expresses this
coincidence. The lemma is proved.
\end{proof}

In this section we have proved.

\begin{theorem}
\label{th1} For the bundle with a connection $(E,\nabla)$ on a
Riemann surface we have constructed the following data:

1) a form~\eqref{eq1} on sphere, all singularities of the form,
except may be zero, belong to the fundamental polygon~$U$, and zero
is a regular singularity;

2) a collection of matrices $S_{x^1_0,x^{i}_0}$, where$x_0^i$~are
vertices of the fundamental polygon, $i=1,\dots,4g$, and
$S_{x^i_0,x^{i+1}_0}=S_{x_0^1x_0^{i+1}}S_{x_0^1x_0^{i}}^{-1}$
satisfy the condition~\eqref{eq3} for all pairs of edges
$x_0^ix_0^{i+1}$ and $x_0^{j+1}x_0^j$, that are glued into one cut.
The matrices $Y_1$,~$Y_2$ are the same as in Lemma~\ref{lem3}.

The inverse is true: using such data one can construct a bundle with
a connection on a surface.
\end{theorem}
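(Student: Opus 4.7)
The theorem has two directions to establish: the construction of the data $(\omega, \{S_{x_0^1,x_0^i}\})$ from a pair $(E,\nabla)$, and the reconstruction of $(E,\nabla)$ from admissible data. The forward direction has essentially been carried out in sections~\ref{sec3.1} and~\ref{sec3.2}, while the backward direction is the content of Proposition~\ref{pr4} and Lemma~\ref{lem3}; the role of the theorem is to assemble these pieces and verify that the constraint~\eqref{eq3} is both necessary and sufficient.

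For the forward direction, the plan is to first pull back $(E,\nabla)$ along the factorization $U\to M$ to obtain $(E_U,\nabla_U)$, extend it across $\partial U$ using Proposition~\ref{pr2} and a trivial bundle $(E',\nabla')$ on $\overline{\mathbb{C}}\setminus U$ with monodromy $M_\gamma=M_{a_1}\dots M_{a_n}$ around zero, and then appeal to the meromorphic triviality of bundles on $\overline{\mathbb{C}}$ (with a trivialization holomorphic away from zero) to read off $\omega$ as in~\eqref{eq1}. The matrices $S_{x_0^1,x_0^i}$ are then defined by Definitions~\ref{def6}--\ref{def7}. It remains to verify that these matrices satisfy~\eqref{eq3}: since for a genuine pair $(E,\nabla)$ the monodromy along the image of a canonical cut is independent of which of the two boundary edges $x_0^ix_0^{i+1}$ or $x_0^{j+1}x_0^j$ one uses to compute it, Lemma~\ref{lem2} applied to both edges yields two expressions for the same monodromy matrix, and their equality is exactly~\eqref{eq3}.

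For the backward direction, the data immediately yields a trivial bundle with connection $(E_U,\nabla_U)$ on~$U$ determined by $\omega$. Proposition~\ref{pr3} tells us how to reconstruct the operators $S_{z,z'}$ for arbitrary glued pairs $z,z'\in\partial U$ from the vertex data $S_{x_0^1,x_0^i}$ (via $S_{x_0^ix_0^{j+1}}=S_{x_0^1x_0^i}^{-1}S_{x_0^1x_0^{j+1}}$), and Proposition~\ref{pr4} organizes the gluing into $(E,\nabla)$ on $M$. Lemma~\ref{lem3} provides the precise obstruction: the gluing is consistent on every canonical cut if and only if~\eqref{eq3} holds for every pair of identified edges, which is assumed.

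The only nontrivial point, and the one I expect to be the main obstacle, is the equivalence between the compatibility of the pointwise gluing along a cut and the equality of monodromies~\eqref{eq3}. The idea is that near the central vertex $x_0^1$ all cuts meet, and the matrices $S_{x_0^1,x_0^i}$ already fix a consistent identification in a small neighborhood~$O$ of $x_0$; extending the identification along the remainder of each cut by horizontal transport, one returns to the setting of Proposition~\ref{pr2} where the obstruction to gluing along a closed curve is precisely the discrepancy of monodromies, and~\eqref{eq3} records that this discrepancy vanishes. Once this is in place, the theorem follows by collecting the statements of the cited propositions and lemmas.
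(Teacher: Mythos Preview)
Your proposal is correct and matches the paper's approach exactly: the paper gives no separate proof of Theorem~\ref{th1}, declaring simply ``In this section we have proved'' before stating it, so the theorem is a summary of the constructions in sections~\ref{sec3.1}--\ref{sec3.2} (forward direction) together with Proposition~\ref{pr4} and Lemma~\ref{lem3} (backward direction). Your identification of the ``main obstacle'' is also spot on---it is precisely the sufficiency argument in the proof of Lemma~\ref{lem3}, where one first glues near the common vertex and then invokes Proposition~\ref{pr2} along the remainder of each cut, with~\eqref{eq3} guaranteeing the required coincidence of monodromies.
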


\section{Isomonodomic deformation}
\label{sec4}

In the previous sections pair a $(E,\nabla)$ on the surface we  have
constructed a form~\eqref{eq1}  on a Riemann sphere with a regular
singularity in zero and matrices $S_{x^1_0,x^i_0}$.   Now we show
how the isomonodromic deformations of pairs $(E,\nabla)$ are
described using such correspondence.

Let $(E^1,\nabla^1)$~-be a pair on $ \widetilde{T}_1$. For
$\tau\in\widetilde{T}$ denote by $(E^1,\nabla^1)|_{\tau}$
 a restriction of the bundle with a connection $(E^1,\nabla^1)$ to the subspace $\widetilde{T}_1|_{\tau}$ (see~Definition~\ref{def3}).

\begin{definition}
\label{def9} An isomonodormic family is pair $(E^1,\nabla^1)$
on~$\widetilde{T}_1$ such that:

1) a pair $(E^1,\nabla^1)$has singularities on hypersurfaces $z=a_i$
(more precise, on hypersurfaces in~$\widetilde{T}_1$, that are
preimages of hypersurfaces $z=a_i$ in~$T_1$);

2) for all $\tau\in \widetilde{T}$ pairs $(E^1,\nabla^1)|_{\tau}$
have the same monodormy.
\end{definition}

Let us be given a point $\tau_0\in \widetilde{T}$. Let
$(E,\nabla)$~be a pair on a marked Riemann surface, corresponding
to~$\tau_0$, the singularities of $\nabla$ correspond to marked
points of~$\tau_0$.

\begin{definition}
\label{def10} We say that an isomonodromic family $(E^1,\nabla^1)$
describes a deformation of a pair $(E,\nabla)$, if
$(E^1,\nabla^1)|_{\tau_0}=(E,\nabla)$.
\end{definition}

\begin{definition}
\label{def11} A family $(E^1,\nabla^1)$ is called Schlesinger, if
for a fixed point in the Teichmuller space in some neighborhood of
the hypersurface $z=a_i$ a connection $\nabla^1$ is written in local
coordinates as a form of type
\begin{equation*}
\frac{B_i}{\zeta-a_i}\,d(\zeta-a_i)+h(\zeta,a_i),
\end{equation*}
where $h(\zeta,a_i)$~is a holomorphic form, $B_i$~are holomorphic
functions of~$a_i$.
\end{definition}

Let us state a result about global existence of deformations.

\begin{proposition}
\label{pr5} For every logarithmic initial pair $(E,\nabla)$ at
${t_0\in\widetilde{T}}$  there exists a unique its continuation  to
the Schlesinger isomonodromic family $(E^1,\nabla^1)$.
\end{proposition}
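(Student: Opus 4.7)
The plan is to combine the Riemann--Hilbert correspondence with the Painlev\'e property of the Schlesinger system, exploiting the fact that $\widetilde{T}_1$ is a universal covering precisely in order to unfold the branching that obstructs global continuation.

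First I would establish local existence and uniqueness in a neighborhood of $t_0$. The Schlesinger condition of Definition~\ref{def11} prescribes the singular type of $\nabla^1$ along each hypersurface $z=a_i$: locally it is determined by the residue $B_i(a_1,\dots,a_n,\tau)$, and the isomonodromy requirement forces these residues to satisfy a Pfaffian system extending the classical Schlesinger equations to include the Teichmuller parameters. Its Frobenius integrability is a consequence of the flatness/zero-curvature formulation of isomonodromy, so the Cauchy--Kovalevskaya theorem (applied in the complement of the diagonals $a_i=a_j$ and of singularities of the complex structure) yields a unique holomorphic solution with initial data $(E,\nabla)$ at $t_0$. Translating back through Theorem~\ref{th1}, this gives a locally defined Schlesinger isomonodromic family.

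For global continuation I would proceed as follows. The canonical cuts associated to a point of the Teichmuller space (Section~\ref{sec2}) produce a canonical generating set of $\pi_1$ of the punctured surface, so the monodromy representation of the initial pair $(E,\nabla)$ is well defined and can be kept fixed as $\tau$ moves in $\widetilde{T}_1$. The universal covering is introduced precisely so that this representation is transported single-valuedly; branching around the discriminant $a_i=a_j$ and around nontrivial loops in moduli is thereby eliminated, just as in the classical Malgrange argument. Using the reconstruction of Theorem~\ref{th1}, the evolution of $\omega$ should be governed by a Schlesinger-type system on the sphere, while the matrices $S_{x_0^1,x_0^i}$ obey an auxiliary linear system; linearity of the latter produces no further obstruction, so global existence is reduced to the Painlev\'e property of the nonlinear subsystem. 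Uniqueness on all of $\widetilde{T}_1$ follows by analytic continuation from the local uniqueness, since $\widetilde{T}_1$ is connected and simply connected in the relevant variables.

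The principal obstacle is the nonlinear global step: one must verify that the Malgrange-type $\Theta$-divisor argument adapts to the present mixed setting, where the residues $B_i$ evolve on the Riemann sphere but the arguments $a_i$ travel inside a fundamental polygon whose shape varies with~$\tau$. This entails checking that the only singularities of the continued solution are ``movable poles'' of Schlesinger type, and that no new obstructions are created by the motion of the vertices $x_0^i$ of the polygon; the hypothesis that $x_0^1\equiv\infty$ (Proposition~\ref{pr1}) and the a priori restriction in Section~\ref{sec2.2} preventing singularities from crossing canonical cuts are what make this control feasible.
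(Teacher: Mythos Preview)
Your approach is genuinely different from the paper's and considerably more elaborate than what the statement requires. The paper gives only a short sketch: it observes that the inclusion of the punctured surface into $\widetilde{T}_1$ induces an isomorphism $\pi_1(M\setminus\{a_1^0,\dots,a_n^0\})\to\pi_1(\widetilde{T}_1)$ (a consequence of homotopy equivalence), then invokes the R\"ohrl construction~\cite{13} to build $(E^1,\nabla^1)$ directly on $\widetilde{T}_1$ away from the hypersurfaces $z=a_i$, realising the prescribed monodromy representation; near each hypersurface $z=a_i$ the construction is local and is imported from the Riemann sphere case. No differential equations are solved and no Painlev\'e/Malgrange machinery is used.

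Your route---set up the Pfaffian system, solve it locally by Frobenius, then globalise by a Malgrange-type $\Theta$-divisor argument---can in principle be made to work, but two issues should be flagged. First, there is a logical ordering problem relative to this paper: the derivation of the Schlesinger-type equations (Propositions~\ref{pr6} and~\ref{pr7}) \emph{presupposes} the existence of the family $(E^1,\nabla^1)$, so you would have to re-derive those equations independently before using them to prove Proposition~\ref{pr5}. Second, and more substantively, the Painlev\'e property gives only \emph{meromorphic} continuation of the coefficients $B_i,C_l$ of $\omega$; along the $\Theta$-divisor the form blows up because the chosen trivialisation of $E_{\overline{\mathbb C}}$ fails, and one must argue separately that the pair $(E^1,\nabla^1)$ itself extends holomorphically across it. That extension step is essentially a Riemann--Hilbert argument again, so you end up needing the R\"ohrl-type construction anyway. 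The paper's approach buys simplicity by going straight to the topological construction; your approach would buy explicit control of the coefficients, at the cost of the unverified adaptation you yourself identify as the principal obstacle.
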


\begin{proof}
Since this proposition is well-known let us give only a sketch of a
proof. We must construct a pair $(E^1,\nabla^1)$
on~$\widetilde{T}_1$.  First of all note that there is an
isomorphism $\pi_1(M\setminus
\{a^0_1,\dots,a^0_n\})\to\pi_1(\widetilde{T}_1)$ (here
$a_1^0,\dots,a_n^0$~are initial positions of singularities); this is
a corollary of the homotopic equivalence .  Using the R\"orl
construction~\cite{13}, one cam construct a pair $(E^1,\nabla^1)$
on~$\widetilde{T}_1$ outside small neighborhoods of hypersurfaces
$z=a_i$. The problem of  a construction of $(E^1,\nabla^1)$in a
neighborhood of a hypersurface $z=a_i$ is local, we can use the
analogous construction in the case of the Riemann sphere. Thus we
obtain a pair $(E^1,\nabla^1)$ on~$\widetilde{T}_1$.
\end{proof}

Establish a relation between a pair $(E^1,\nabla^1)$
on~$\widetilde{T}_1$ and a family of forms from the
theorem~\ref{th1}. Consider the Schlelinger family $(E^1,\nabla^1)$
and a point $t^0\in\widetilde{T}_1$. Denote ~as $\tau^0$
singularities, corresponding to~$t^0$ as $a_1^0,\dots,a_n^0$, and a
point in the Teichmullr space, corresponding to~$t^0$. Let
$W_{a_i^0}$~be a sufficiently small neighborhood of the
point~$a^0_i$ (such that $W_{a_i^0}\cap W_{a_j^0}=\varnothing$, if
$i\neq j$), $i=1,\dots,n$, and $V_{\tau^0}$~ be a sufficiently small
neighborhood of the point~$\tau^0$ in the Teichmuller space.

\begin{proposition}
\label{pr6} There exists a form $\omega^1$ on the space
\begin{equation*}
\bigl\{(z,a_1,\dots,a_n,\tau)\colon z\in\overline{\mathbb{C}},\;
a_i\in W_{a_i^0},\; i=1,\dots,n,\; t\in V_{\tau^0}\bigr\},
\end{equation*}
with the following properties:

1) the following presentation takes place
\begin{equation}
\omega^1=\frac{C_k}{z^k}\,dz+\dots+\frac{C_1}{z}\,dz+
\sum_{i=1}^n\frac{B_i}{z-a_i}\,d(z-a_i); \label{eq4}
\end{equation}

2) if we fix a point~$t$, which is sufficient close to ~$t^0$, and
consider a pair $(E^1,\nabla^1)|_{t}$ and a form $\omega$,
corresponding to the pair $(E^1,\nabla^1)|_{t}$ by
theorem~\ref{th1}, then the form~$\omega$  can be obtained by fixing
singularities in the form~$\omega^1$ in singularities, corresponding
to the point ~$t$\,\footnote{It is important to note that the
form~$\omega^1$ does not contain coordinates on~$V_{\tau^0}$ and
differentials of these coordinates.}.
\end{proposition}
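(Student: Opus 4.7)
The plan is to construct $\omega^1$ by performing the construction of Theorem~\ref{th1} fibrewise in $t$, then verifying smoothness in parameters and the prescribed pole structure~\eqref{eq4}.

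For each $t=(z,a_1,\dots,a_n,\tau)$ in the parameter space, I would apply Theorem~\ref{th1} to the pair $(E^1,\nabla^1)|_t$: pull it back to the fundamental polygon $U(\tau)$, extend to $\overline{\mathbb{C}}$ by gluing a trivial bundle on $\overline{\mathbb{C}}\setminus U(\tau)$ whose connection has a single singularity at $z=0$ with monodromy $M_\gamma=M_{a_1}\cdots M_{a_n}$, and fix a meromorphic trivialization that is holomorphic off the origin. This produces the form $\omega(t)$ of shape~\eqref{eq1}. To assemble these into a smooth family, observe that $U(\tau)$ depends smoothly on $\tau$ by Proposition~\ref{pr1}, that $M_\gamma$ is constant in $t$ by the isomonodromy hypothesis (Definition~\ref{def9}) so the gluing on $\overline{\mathbb{C}}\setminus U(\tau)$ varies smoothly, and that the meromorphic trivialization can be chosen smoothly in $t$, for instance by normalizing its value at the fixed vertex $\infty$. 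The result is a $1$-form $\omega^1$ on the product $\{(z,a_1,\dots,a_n,\tau)\}$.

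Next I would derive the pole structure~\eqref{eq4}. Near the hypersurface $z=a_i$, the Schlesinger hypothesis (Definition~\ref{def11}) writes $\nabla^1$ locally as $B_i(t)/(\zeta-a_i)\,d(\zeta-a_i)+h(\zeta,a_i)$ with $h$ holomorphic, and this polar part is preserved under the pullback to $U(\tau)$ and the chosen trivialization. Hence $\omega^1$ acquires a contribution $B_i(t)/(z-a_i)\,d(z-a_i)$. The appearance of $d(z-a_i)=dz-da_i$ rather than bare $dz$ is forced by the fact that the pole moves with $a_i$: the total $1$-form on the product must restrict to $B_i/(z-a_i)\,dz$ on each fibre $a=\mathrm{const}$ and be compatible with the translation $(z,a_i)\mapsto(z+\epsilon,a_i+\epsilon)$. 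The singularity at $z=0$ is regular by Definition~\ref{def4}; since the point $z=0$ does not move with parameters, this contributes only $dz$-differentials, giving the terms $\sum_{k=1}^K C_k(t)/z^k\,dz$ with no $da_i$ parts.

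The main obstacle, emphasized in the footnote, is the requirement that $\omega^1$ contain no coordinates on $V_{\tau^0}$ and no $d\tau$-differentials. In the present framework the $\tau$-direction of the deformation is encoded not in the connection form on the sphere but in the gluing matrices $S_{x_0^1,x_0^i}$ of Section~\ref{sec3.2}. To eliminate $d\tau$-components I would use the freedom to modify the meromorphic trivialization by a $\tau$-dependent (and $z$-independent) gauge transformation; after this gauge fix the residual $\tau$-dependence of $C_k$ and $B_i$ can be reabsorbed into the matrices $S_{x_0^1,x_0^i}$, so that the coefficients become functions of $(a_1,\dots,a_n)$ alone. Verifying that such a normalization is possible consistently throughout the neighbourhood $V_{\tau^0}$ is the delicate step, and it relies on the smoothness of the pullback and gluing data in $\tau$ established above.
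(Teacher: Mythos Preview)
Your overall strategy---fibrewise construction followed by a gauge normalization---is the right shape, but two key technical steps are missing, and one is misdirected.

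First, the step ``the meromorphic trivialization can be chosen smoothly in $t$, for instance by normalizing its value at the fixed vertex $\infty$'' is not justified. Pointwise existence of a meromorphic trivialization holomorphic off $0$ does not by itself give a trivialization varying holomorphically in the parameters $a_1,\dots,a_n$. The paper invokes the Grothendieck--Birkhoff theorem \emph{with parameters} (cf.~\cite{1}) at this point: one first observes that the pulled-back bundle $E^1_O$ is independent of $a_1,\dots,a_n$ (only the connection varies), extends it to $\overline{\mathbb{C}}\times W_{a_1^0}\times\dots\times W_{a_n^0}$, and then applies the parametric version of Birkhoff factorization. Without this, you have no control on how your fibrewise $\omega(t)$ fit together.

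Second, and more seriously, your derivation of the shape~\eqref{eq4} is incomplete. The Schlesinger hypothesis in a local coordinate $\zeta$ gives the singular part $B_i\,d(\zeta-a_i)/(\zeta-a_i)$, but after passing to the global trivialization you have no reason to exclude additional \emph{holomorphic} $da_i$-terms; the translation-compatibility argument you offer does not apply, since the other singularities $a_j$ and $0$ are not translated. In fact the paper shows that a priori one obtains
\[
\omega^1=\frac{C_k}{z^k}\,dz+\dots+\frac{C_1}{z}\,dz+\sum_{i=1}^n\frac{B_i}{z-a_i}\,d(z-a_i)+\sum_{i=1}^n D_i\,da_i,
\]
and the elimination of the $D_i$ is the crux: one uses isomonodromy to conclude $d\omega^1=\omega^1\wedge\omega^1$, solves $dY_0=\omega^1 Y_0$ in the $a$-variables with $Y_0(\infty,a^0)=I$, and gauges by $Y_0(\infty,a)^{-1}$. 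In the new frame there is a fundamental solution with $Y(\infty,a)\equiv I$, whence $\partial Y/\partial a_i|_{z=\infty}=D_i=0$. This use of integrability is the missing idea.

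Finally, your last paragraph focuses on removing $d\tau$-terms by gauging and ``reabsorbing $\tau$-dependence into the $S_{x_0^1,x_0^i}$.'' The paper's argument is different and simpler: varying $\tau$ changes only the shape of the fundamental polygon, while the extension to $\overline{\mathbb{C}}$ and the trivialization constructed above do not depend on that shape; hence the same trivialization works for all $\tau\in V_{\tau^0}$, and no $d\tau$-terms ever appear. The real work is killing the $D_i\,da_i$, not the $d\tau$.
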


\begin{proof}
Take an intersection of all fundamental polygons that correspond to
all points in the Teichmuller space, that belong to~$V_{\tau}$. Let
$O$~be an open neighborhood of this intersection, whose boundary is
smooth simple curve. Suppose that the neighborhood~$V_{\tau^0}$ is
so small that $W_{a_i^0}\subset O$ for all $i=1,\dots,n$.

There exist a mapping
\begin{equation}
f\colon O\times W_{a_1^0}\times\dots\times W_{a_n^0}\times
V_{\tau}\to T_1, \label{eq5}
\end{equation}
which is defined in the following way. Take a point
\begin{equation*}
(z,a_1,\dots,a_n,t)\in O\times W_{a_1^0}\times\dots\times
W_{a_n^0}\times V_{\tau}.
\end{equation*}
Using a point~$t$ in the Teichmuller space, one can reconstruct a
pair $G_t\subset\operatorname{Aut} D$ (remind that~$D$ is the
exterior of the unit disc). Denote as $z/G_t$ the image of $z\in
O\subset D$ on the marked Riemann surface under the factorization
$D\to M$ under the action of~$G_t$. Then~$f$ maps the point
$(z,a_1,\dots,a_n,t)$ to the point
$(t,z/G_t,a_1/G_t,\dots,a_n/G_t)\in T_1$. Since the image of this
mapping is small enough, the mapping~\eqref{eq5}  is well defined.
Note that this mapping is not holomorphic, but it becomes
holomorphic if we fix a point in the Teichmuller space.

Take in inverse image of $(E^1_O,\nabla^1_O)$ of the pair
$(E^1,\nabla^1)$ under the mapping~$f$.  If a point in the
Teichmuller space is fixed, this is a holomorphically trivial bundle
with a connection.

When the parameters $a_1,\dots,a_n,\tau$ are fixed one can continue
$(E^1_O,\nabla^1_O)$ from the subset~$O$  to the whole Riemann
sphere~$\overline{\mathbb{C}}$. Note that the
connection~$\nabla^1_O$ depends holomorphically from
$a_1,\dots,a_n$, and the bundle~$E^1_O$ does not depend on
$a_1,\dots,a_n$. Hence for fixed  $\tau\in V_{\tau^0}$ the
holomorphic pair $(E^1_O,\nabla^1_O)$ can be continued from $O\times
W_{a_1^0}\times\dots\times W_{a_n^0}\times \{\tau\}$ to a
holomorphic pair on $\overline{\mathbb{C}}\times
W_{a_1^0}\times\dots\times W_{a_n^0}\times\{\tau\}$.

By the Grotendick-Birkhoff theorem with parameters~\cite{1}  there
exists a meromorphic trivialization of this bundle, which is
holomorphic on $\overline{\mathbb{C}}\setminus\{0\}\times
W_{a_1^0}\times\dots\times W_{a_n^0}\times \{\tau\}$. The
form~$\omega^1$ is the form of the connection~$\nabla^1_O$in this
trivialization. Note that~$\omega^1$ can (in a non-holomorphic way)
depend on~$\tau$. Remind that $(E^1,\nabla^1)$~is a Schlesinger
family.  Hence for a fixed~$\tau$ the form~$\omega^1$ is of type
\begin{equation*}
\frac{C_k}{z^k}\,dz+\dots+\frac{C_1}{z}\,dz+
\sum_{i=1}^n\frac{B_i}{z-a_i}\,d(z-a_i)+\sum_{i=1}^n D_i\,da_i.
\end{equation*}

Choose another trivialization in which coefficients at $da_i$
vanish.  It is constructed in the following way. The family
$(E^1,\nabla^1)$ is isomonodromic, hence the family of
forms~$\omega^1$ is also isomonodromic. This is equivalent of the
fact that $d\omega^1=\omega^1{\wedge}\,\omega^1$, i.e. the
form~$\omega^1$ is integrable. Let $(p\times p)$  matrix
$Y_0(z,a_1,\dots,a_n)$ be a solution of $dY_0=\omega^1Y_0$ (the
differential is taken by the variables $a_1,\dots,a_n$)  with the
initial condition $Y_0(\infty,a^0_1,\dots,a^0_n)=I$. Let
$Y_0^{\infty}=Y_0(\infty,a_1,\dots,a_n)$. Define a new
trivialization of $E^1_O$ on
$\overline{\mathbb{C}}\setminus\{0\}\times
W_{a_1^0}\times\dots\times W_{a_n^0}\times \{\tau\}$, by acting on
the old base by the matrix $(Y_0^{\infty})^{-1}$, to obtain a new
base in every stalk. The form of the connection~$\nabla^1_O$ in this
new trivialization is the new form~$\omega^1$.

By construction in this new trivialization there exist a solution of
the system $dY=\omega^1Y$ such that $Y(\infty,a_1,\dots,a_n)\equiv
I$ for all $a_1,\dots,a_n$. Since
\begin{equation*}
\frac{\partial Y}{\partial a_i}\biggr|_{z=\infty}=
\frac{B_i}{z-a_i}\biggr|_{z=\infty}+D_i=D_i,
\end{equation*}
we have $D_i=0$. Thus in new trivialization of $E^1_O$ on the space
$\overline{\mathbb{C}}\setminus\{0\}\times
W_{a_1^0}\times\dots\times W_{a_n^0}\times \{\tau\}$ the
form~$\omega^1$ is written as~\eqref{eq4}.

Note that the change of the point in the Teichmuller space leads
only to the change of the fundamental polygon.  But  the
trivialization of~$E_O^1$ in section~\ref{sec3.1} does not depend on
its precise shape. Thus the trivialization of~$E^1_O$ can be chosen
such that the equality~\eqref{eq4} for~$\omega^1$ takes place on the
whole space $\overline{\mathbb{C}}\setminus\{0\}\times
W_{a_1^0}\times\dots\times W_{a_n^0}\times V_{\tau}$. The
Proposition is proved.
\end{proof}

Now let us write the equations of isomonodromic deformations.  The
first group of equations describes the Schlesinger deformations of
the form~\eqref{eq1}. Obviously when the pair $(E,\nabla)$ deforms
isomonodromically, the form $\omega$ also deforms isomonodromically.

Consider the Schlesinger deformations.  Then the deformations
of~$\omega$ are defined by the form~$\omega^1$ of type~\eqref{eq4}.
The form~$\omega^1$ defines an isomonodromic deformation if and only
if $d\omega^1=\omega^1\wedge\omega^1$. Note that~$\omega^1$ does not
contain coordinates on the Tecihmuller space and differentials of
these coordinates,  hence the differential is taken only by
variables~$z$,~$a_i$.

\begin{proposition}
\label{pr7} The equations of isomonodromic deformations of $\omega$
are
\begin{align}
&dB_i=-\sum_{\substack{j=1,\\ j\neq i}}^n
\frac{[B_i,B_j]}{a_i-a_j}\,d(a_i-a_j)+ \frac{\partial C_1}{\partial
a_i}\,da_i, \label{eq6}
\\
&\frac{\partial C_2}{\partial a_i}- \frac{\partial C_1}{\partial
a_i}a_i=-[B_i,C_1],\qquad\dots,\qquad \frac{\partial
C_{l+1}}{\partial a_i}- \frac{\partial C_l}{\partial
a_i}a_i=-[B_i,C_l],
\label{eq7}\\
&\dots\dots\dots\dots\dots\dots\dots\dots\dots\dots\dots\dots,
\nonumber
\\
&-\frac{\partial C_k}{\partial a_i}a_i=-[B_i,C_k],\qquad
i=1,\dots,n. \label{eq8}
\end{align}
In the typical case $C_l=0$,~$l>1$, if we put $a_0=0$,
$B_0=C_1=-\sum_{i=1}^nB_i$, the system above turns into an ordinary
Schlesinger system
\begin{equation}
B_i=-\sum_{\substack{j=0,\\ j\neq i}}^n
\frac{[B_i,B_j]}{a_i-a_j}\,d(a_i-a_j). \label{eq9}
\end{equation}
Here $i=0,1,\dots,n$.
\end{proposition}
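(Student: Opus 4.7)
The plan is to expand the zero-curvature condition $d\omega^1 = \omega^1\wedge\omega^1$ in the $(z,a)$-variables and extract equations by matching principal parts in~$z$. Writing $\omega^1 = A(z,a)\,dz + \sum_i E_i(z,a)\,da_i$ with $A = \sum_{l=1}^{k} C_l/z^l + \sum_i B_i/(z-a_i)$ and $E_i = -B_i/(z-a_i)$ (the sign coming from $d(z-a_i)=dz-da_i$), a direct exterior-derivative computation gives
\[
d\omega^1 - \omega^1\wedge\omega^1 = \sum_j\bigl(\partial_z E_j - \partial_{a_j}A - [A,E_j]\bigr)\,dz\wedge da_j + \sum_{i<j}\bigl(\partial_{a_i}E_j - \partial_{a_j}E_i - [E_i,E_j]\bigr)\,da_i\wedge da_j,
\]
so integrability decouples into an \emph{off-diagonal} family (the $da\wedge da$ part) and a \emph{diagonal} family (the $dz\wedge da$ part), each to hold as an identity of rational functions in~$z$.

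For the $da_i\wedge da_j$ equations ($i\neq j$), substituting $E_i$ and using $\frac{1}{(z-a_i)(z-a_j)} = \frac{1}{a_i-a_j}\bigl(\frac{1}{z-a_i}-\frac{1}{z-a_j}\bigr)$, I would match residues at $z=a_i$ and $z=a_j$ to obtain $\partial_{a_j}B_i = [B_i,B_j]/(a_i-a_j)$ for $i\neq j$. These are precisely the off-diagonal ($j\neq i$) contributions to~\eqref{eq6}.

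For the $dz\wedge da_j$ equations I would first observe that $\partial_z E_j = B_j/(z-a_j)^2$ cancels the double-pole term produced by $\partial_{a_j}A$, leaving an identity with only simple poles at each $z=a_i$ and a pole of order $k$ at $z=0$. Matching the residue at $z=a_j$ yields the diagonal equation for $\partial_{a_j}B_j$ that supplies the $\partial_{a_j}C_1$-contribution to~\eqref{eq6}. The crucial step is the partial fraction
\[
\frac{1}{z^l(z-a_j)} = \frac{1}{a_j^l(z-a_j)} - \sum_{m=1}^{l}\frac{1}{a_j^{l-m+1}\,z^m},
\]
verified at once by multiplying out. Applying it to each summand $[C_l,B_j]/(z^l(z-a_j))$ on the right-hand side and matching the coefficient of $1/z^m$ for $m=1,\dots,k$ produces a ladder of linear matrix equations on the $C_l$. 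Forming at each level $m<k$ the combination ``(equation at level $m$) $-\, a_j\cdot$(equation at level $m+1$)'' causes the infinite-sum tails to telescope and leaves the single commutator $[B_j,C_m]$, yielding~\eqref{eq7}; the top level $m=k$ has no higher-order partner and gives the terminal equation~\eqref{eq8}.

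For the typical case I would specialise to $C_l=0$ for $l>1$, so that $C_1=-\sum_{i=1}^n B_i$ by the residue relation on the sphere. Setting $a_0=0$ and $B_0=C_1$, equation~\eqref{eq8} at $k=1$ reads $a_i\,\partial_{a_i}C_1 = [B_i,C_1]$, which is precisely the $j=0$ Schlesinger equation $\partial_{a_i}B_0 = -[B_0,B_i]/(a_0-a_i)$; absorbing the $\partial_{a_i}C_1$-term of~\eqref{eq6} into the new $j=0$ summand then produces the uniform system~\eqref{eq9} for $i=0,1,\dots,n$. The main technical obstacle will be careful sign bookkeeping in the partial-fraction expansion, and in particular verifying that the matchings of principal parts at $z=0$ really do telescope into the clean ladder~\eqref{eq7}--\eqref{eq8}; once the partial-fraction identities are in hand the remainder is routine commutator manipulation.
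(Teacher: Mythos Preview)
Your proposal is correct and follows essentially the same route as the paper: expand the integrability condition $d\omega^1=\omega^1\wedge\omega^1$ and match coefficients of the independent 2-forms, with the $da_i\wedge da_j$ part giving the off-diagonal Schlesinger equations and the $dz\wedge da_j$ part giving the ladder for the~$C_l$. The only organizational difference is that, rather than expanding $\frac{1}{z^l(z-a_j)}$ by partial fractions and then telescoping, the paper simply multiplies the $dz\wedge d(z-a_i)$ equation through by $(z-a_i)$ before reading off the coefficients of $1/z^m$, which yields~\eqref{eq7}--\eqref{eq8} directly without the intermediate telescoping step.
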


\begin{proof}
Explicitly the equality $d\omega^t=\omega^t\wedge\, \omega^t$ can be
written as follows. At first let us write the left hand side:
\begin{align*}
&\sum_{i=1}^n \frac{dB_i}{z-a_i}\,d(z-a_i)+
\sum_{l=1}^k\frac{dC_l}{z^l}\,dz= \sum_{i,j=1}^n\frac{\partial
B_i}{\partial a_j} \frac{1}{z-a_i}\,dz\wedge d(z-a_i)-{}
\\
&-\sum_{i,j=1}^n\frac{\partial B_i}{\partial a_j}
\frac{1}{z-a_i}\,d(z-a_j)\wedge d(z-a_i)+
\sum_{l=1}^k\sum_{i=1}^n\frac{\partial C_l}{\partial a_i}
\frac{1}{z^l}\,dz\wedge d(z-a_i).
\end{align*}
Now let us write the right hand side:
\begin{align*}
&\biggl(\sum_{i=1}^n\frac{B_i}{z-a_i}\,d(z-a_i)+
\sum_{l=1}^k\frac{C_l}{z^l}\,dz\biggr)\wedge
\biggl(\sum_{j=1}^n\frac{B_j}{z-a_j}\,d(z-a_j)+
\sum_{l=1}^k\frac{C_l}{z^l}\,dz\biggr)=
\\
&=\sum_{\substack{i,j=1,\\ i\neq j}}^n\frac{B_iB_j}{(a_i-a_j)}
\biggl(\frac{1}{z-a_i}-\frac{1}{z-a_j}\biggr)\, d(z-a_i)\wedge
d(z-a_j)+{}
\\
&\phantom{={}}+
\sum_{l=1}^k\sum_{i=1}^n\frac{[B_i,C_l]}{(z-a_i)z^l}\,
d(z-a_i)\wedge dz.
\end{align*}
The forms $d(z-a_i)\wedge d(z-a_j)$ Ё $dz\wedge d(z-a_i)$ are
independent~\cite{1},
 so the coefficients at these forms on the right and on the left must coincide. The coincidence of the coefficients at the form
$dz\wedge d(z-a_i)$ gives the equation
\begin{equation*}
\sum_{j=1}^n \frac{\partial B_i}{\partial a_j}\frac{1}{z-a_i}+
\sum_{l=1}^k\frac{\partial C_l}{\partial a_i}\frac{1}{z^l}=
-\sum_{l=1}^k\frac{1}{z^l}\frac{[B_i,C_l]}{z-a_i}.
\end{equation*}
Multiply it by $(z-a_i)$, one gets
\begin{equation*}
\sum_{j=1}^n \frac{\partial B_i}{\partial a_j}+
\sum_{l=1}^k\biggl(\frac{1}{z^{l-1}}-a_i\frac{1}{z^l}\biggr)
\frac{\partial C_l}{\partial a_i}=
-\sum_{l=1}^k\frac{1}{z^l}[B_i,C_l].
\end{equation*}
Consider the coefficient at powers of~$1/z$: for~$z^0$ we obtain the
equation
\begin{equation}
\sum_{j=1}^n \frac{\partial B_i}{\partial a_j}+ \frac{\partial
C_1}{\partial a_i}=0, \label{eq10}
\end{equation}
for $1/z,\dots,1/z^l$ we obtain equations~\eqref{eq7} and at last
for $1/z^k$ we obtain the equation~\eqref{eq8}.

The coincidence of coefficients at forms $d(z-a_j)\wedge d(z-a_i)$
gives equations that does not contain~$C_k$, they are equivalent to
the following equations
\begin{equation*}
\frac{\partial B_i}{\partial a_j}= \frac{[B_i,B_j]}{a_i-a_j},\qquad
i\neq j.
\end{equation*}
These equations together with~\eqref{eq10} can be written together
as the equation~\eqref{eq6}.

In the case $C_2=\dots=C_k=0$  the derivation above just reproduce
the derivation of the Schlesinger equations for the deformations of
the form~\eqref{eq2} (see~\cite{1}). In particular in notations
$a_0=0$, $B_0=C_1=-\sum_{i=1}^nB_i$ we obtain an ordinary
Schlesinger system~\eqref{eq9}. The proposition is proved.
\end{proof}

The equations~\eqref{eq6}--\eqref{eq8}  are uniquely solvable for
every initial conditions just because they describe the Schlesinger
deformations of the connection~$\nabla_{\overline{\mathbb{C}}}$ on a
Riemann sphere.

Now let us write the second group of equation that describe the
evolution of matrices~$S_{x^1_0,x^i_0}$. In lemma~\ref{lem1} it was
proved that $S^{-1}_{x^{1}_0,x^{i}_0}Y(x_0^{i})$ is the monodromy
matrix  along the loop, that we obtain after the factorization
$x^{1}_0x^{2}_0\dots x^{i-1}_0x^{i}_0$ (here~$Y$ is the same as in
Lemma~\ref{lem1}).

Take an isomonodromic family of solutions $dY=\omega^1Y$  such that
in the initial position of singularities~$a_j$ in the point
$z=\infty$ the matrix~$Y$ is identical (remind that in this equation
the differential is taken only by variables~$z$ and~$a_j$,
$j=1,\dots,n$). The form $\omega^1$, defined in~\eqref{eq4}, is such
that the matrix~$Y$ in $z=\infty$ is constantly identical.  This
follows from the fact that $\frac{\partial Y}{\partial
a_i}\bigr|_{z=\infty}= \frac{B_i}{z-a_i}\bigr|_{z=\infty}=0$. That
is why we can assume that the matrix~$Y$, through which the
monodromy in Lemma ~\ref{lem1} is expressed, satisfies $dY=\omega^1
Y$.

The monodromy along each cut must be conserved. Hence
$S^{-1}_{x^{1}_0,x^{i}_0}Y(x_0^{i})=\operatorname{const}$ or
equivivalently, $S_{x^{1}_0,x^{i}_0}=
\operatorname{const}^{-1}\,\cdot\, Y(x_0^{i})$. Hence the
matrix~$S_{x^{1}_0,x^{i}_0}$ satisfies the same equation, as
$Y(x_0^{i})$, but with another initial condition.  In other words
$d_{x_0^i,a_j}S_{x_0^ix_0^{i+1}}= \omega^1|_{z\mapsto
x_0^i}S_{x_0^ix_0^{i+1}}$.  Here $\omega^1|_{z\mapsto x_0^i}$ is the
form~$\omega^1$, in which the variable~$z$ is replaced to~$x_0^i$.
Namely
\begin{equation*}
\omega^1|_{z\mapsto x_0^i}=
\sum_{j=1}^n\frac{B_j}{x_0^i-a_j}\,d(x_0^i-a_j)+
\biggl(\frac{C_{k}}{{x_0^i}^{k}}+\dots+
\frac{C_{0}}{{x_0^i}}\biggr)\,dx_0^i.
\end{equation*}

Now let us state the main result of the present paper.

\begin{theorem}
\label{th2} For every initial logarithmic pair $(E,\nabla)$ on a
marked Riemann surface there exists a unique Schlesinger
isomonodromic family  $(E^1,\nabla^1)$. In terms of data from the
Theorem~\ref{th1} The Schlesinger isomonodromic deformations are
locally described as follows.

1.  The vertices of the fundamental polygon change their posiitons
according to the change of a point in the Teichmuller space.

2. The evolution of the coefficients $C_l$, $B_i$ of the
form~\eqref{eq1} is given by the equations from the
proposition~\ref{pr7}. In the case $C_l=0$, $l>0$, in notations
$a_0=0$, $B_0=-\sum_{i=1}^nB_i$ these equations are the Schlesinger
system for~$B_i$, $i=0,\dots,n$.

3. The evolution of matrices $S_{x^{1}_0,x^{i}_0}$ is described by
equations $dS_{x^{1}_0,x^{i}_0}=\omega^1S_{x^{1}_0,x^{i}_0}$,
where~$\omega_1$ is the form~\eqref{eq4}, in which instead of~$z$ we
write the variable~$x_0^i$. The differential in the left side is
taken by the variables~$a_j$,~$x_0^i$.
\end{theorem}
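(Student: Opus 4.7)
The plan is to assemble the three items of the theorem from ingredients already prepared in the preceding sections, so that very little genuinely new work is needed beyond bookkeeping.

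First I would dispose of existence and uniqueness by citing Proposition~\ref{pr5}: for every logarithmic initial pair $(E,\nabla)$ at $\tau_0$ there is a unique Schlesinger isomonodromic family $(E^1,\nabla^1)$ on~$\widetilde T_1$. Item~1 is then essentially a restatement of Proposition~\ref{pr1}: once the Teichmüller parameter moves, the normalized fundamental polygon and hence its vertices $x_0^i$ are determined smoothly, and we only have to observe that $x_0^1=\infty$ is preserved by the normalization we fixed.

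For item~2 I would transport the isomonodromic family to the Riemann sphere via Proposition~\ref{pr6}: the family $(E^1,\nabla^1)$ produces a form $\omega^1$ of the type~\eqref{eq4} which, restricted to any fixed $\tau$, recovers the form $\omega$ of Theorem~\ref{th1} associated to $(E^1,\nabla^1)|_\tau$. The deformation of the pair being isomonodromic forces $\omega^1$ to be integrable in the variables $(z,a_1,\dots,a_n)$, i.e.\ $d\omega^1 = \omega^1\wedge\omega^1$. This is precisely the hypothesis of Proposition~\ref{pr7}, whose equations~\eqref{eq6}--\eqref{eq8} then govern the evolution of $B_i$, $C_l$; in the typical case $C_l=0$ for $l>1$ these reduce to the ordinary Schlesinger system~\eqref{eq9}, which is exactly the content asserted in item~2.

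For item~3 I would use Lemma~\ref{lem1}: the monodromy along the loop obtained from $x_0^1 x_0^2\cdots x_0^i$ is the matrix $S^{-1}_{x_0^1,x_0^i}Y(x_0^i)$, where $Y$ is the fundamental matrix of $dY=\omega^1 Y$ normalized by $Y(\infty,a_1^0,\dots,a_n^0)=I$. The crucial observation is that, since the residues $B_i$ of $\omega^1$ all vanish at $z=\infty$ and $\omega^1$ contains no Teichmüller differentials, one has $\partial_{a_j} Y|_{z=\infty}=0$, so the normalization $Y(\infty,a_1,\dots,a_n)\equiv I$ propagates along the deformation. The isomonodromy condition says $S^{-1}_{x_0^1,x_0^i}Y(x_0^i)$ is constant in the parameters $(a_1,\dots,a_n,\tau)$, whence differentiating $S_{x_0^1,x_0^i} = \mathrm{const}\cdot Y(x_0^i)$ and using $dY = \omega^1 Y$ evaluated at $z=x_0^i$ yields
\begin{equation*}
 dS_{x_0^1,x_0^i} = \omega^1\big|_{z\mapsto x_0^i}\, S_{x_0^1,x_0^i},
\end{equation*}
the differential being taken over $a_j$ and $x_0^i$. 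This is exactly the linear system in item~3.

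I expect the main technical obstacle to lie in the passage $d\omega^1=\omega^1\wedge\omega^1\Longleftrightarrow$ isomonodromy: one must justify that the trivialization in Proposition~\ref{pr6} can be chosen so that no $d\tau$ or $da_i$ coefficients appear in~$\omega^1$ (this is the content of the ``new trivialization'' argument in the proof of Proposition~\ref{pr6}) and that the Teichmüller parameter does not reappear as a hidden dependence when we localize $Y$ at the moving vertex $x_0^i$. Once this is secured, items~2 and~3 follow by invoking Proposition~\ref{pr7} and differentiating the constancy of Lemma~\ref{lem1}'s monodromy formula, respectively; global existence and uniqueness have already been handed to us by Proposition~\ref{pr5}.
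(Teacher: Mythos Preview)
Your proposal is correct and follows essentially the same route as the paper: Proposition~\ref{pr5} for existence/uniqueness, Proposition~\ref{pr1} for item~1, Propositions~\ref{pr6} and~\ref{pr7} for item~2, and the constancy of $S^{-1}_{x_0^1,x_0^i}Y(x_0^i)$ from Lemma~\ref{lem1} combined with $\partial_{a_j}Y|_{z=\infty}=0$ for item~3. The only slip is the order in $S_{x_0^1,x_0^i}=\mathrm{const}\cdot Y(x_0^i)$: from $S^{-1}Y=\mathrm{const}$ one gets $S=Y\cdot\mathrm{const}^{-1}$, and it is this right-multiplication by a constant that makes $dS=\omega^1 S$ follow from $dY=\omega^1 Y$ (the paper's own text contains the same inversion, but the conclusion is unaffected).
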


One ca easily see that the bundle in the pair $(E,\nabla)$ changes,
thus the case of higher genus differs much from the case of
genus~$0$,  where it is natural to consider connetions in a fixed
trivial bundle.

\section{Relation to the Krichiver's approach}
\label{sec5}

Let us give a short comparision of the approach to the description
of isomonodromic deformations of bundles with connection suggested
in the present paper with the approach suggested by
Kricheve~\cite{5}.  More precise we are going to show how using the
parameters that Krichever has used one can reconstruct the form and
matrices from Theorem~\ref{th1}.

Restrict ourself to the case when the marked Riemann surface is
fixed. The reason is that in~\cite{5}  the complex-analytic
coordinates on the Teichmuller space are used,  and in the present
paper we take real-analytic coordinates  on the Teichmuller spaces
$x_0^i$~ which are positions of vertices of the fundamental polygon.
The matrices$S_{x_0^1,x_0^i}$ are complex analytic functions of the
variables~$x_0^i$, $i=2,\dots,4g$. Thus in order to establish  a
relation between the Krichiver's approach and our approach in
general case one must be able to express explicitly positions of
vertices of the fundamental polygon through the complex-analytic
coordinates on the Teichmuller space.  This problem for~$g>1$ is
extremely difficult (see for example a close to this problem
paper~\cite{14}).

Thus let  a  marked Riemann surface be fixed.  Let us be given a
stable bundle with a logarithmic connection $(E,\nabla)$, where~$E$
has rank~$p$and degree $pg$. In the paper~\cite{5}  for the bundle
with a connections some parameters are constructed, these parameters
can be divided into two groups.  The parameters from the first group
describe the bundle (the Turin parameters),  and parameters of the
second group describe a connection in it (see~\S\,2 in~\cite{5}).

The Turin parameters are defined as follows.  There exists a
meromorphic trivialization of a stable bundle of degree $g$. This is
a collection of holomorphic sections $\psi_1,\dots,\psi_p$. In
stalks of~$E$ over all points except $\gamma_1,\dots,\gamma_m$
(where $m=pg$), they form a base in the stalk of~$E$. In the stalks
over $\gamma_1,\dots,\gamma_m$ these sections are linearly
dependent. In the typical case the rank of the span of sections
$\psi_1,\dots,\psi_p$ in these points equals~$p-1$, that is there
exists a unique linear relation
$a_1^i\psi_1(\gamma_i)+\dots+a_p^i\psi_p(\gamma_i)=0$. The Turin
parameters are: the collection of points $\gamma_1,\dots,\gamma_m$
and the vectors $(a_1^i,\dots,a_p^i)$, $i=1,\dots,m$. In the
meromorphic trivialization $\psi_1,\dots,\psi_p$ to the
connection~$\nabla$ there corresponds the form~$\widetilde{\omega}$
with apparent singularities in points $\gamma_1,\dots,\gamma_m$and a
trivial monodormy of the bypass around them. In the paper~\cite{5}
it is suggested that the typical case takes place and these apparent
singularities of~$\widetilde{\omega}$ are simple poles.

The  parameters of the second  group describe the
connection~$\nabla$. Among these parameters there are those that
describe the behavior of~$\widetilde{\omega}$ in a neighborhood
of~$\gamma_i$ (see.~\S\,2 and the Lemma~2.2 in~\cite{5}).  In
particular using these parameters and the Turin parameters one can
reconstruct a residue of the form~$\widetilde{\omega}$ in
~$\gamma_i$. Also there are parameters that define behavior
of~$\widetilde{\omega}$  in a neighborhood of singularities of the
connections (see~\S\,4 in~\cite{5}).  Among these parameters there
are the position of singularities and singular parts of
~$\widetilde{\omega}$ in these neighborhoods.

In the present paper for a pair $(E,\nabla)$ we have constructed a
form~$\omega$ and matrices~$S_{x_0^1,x_0^i}$. Note that in contrast
with Krichever's parameters one can not say that the form~$\omega$
defines a connection and matrices $S_{x_0^1,x_0^i}$~define a bundle,
since in the procedure of reconstruction of the bundle the
from~$\omega$ participates.

Now let us establish a relation between descriptions of
$(E,\nabla)$, suggested in the present paper and the description
suggested by Krichever.  At first we construct a form and matrices
to the meromorphically trivialized bundle.  Using the
trivialization~$E|_U$, defined by sections $\psi_1,\dots,\psi_p$, we
obtain that the from is an inverse image of~$\widetilde{\omega}$ on
the fundamental polygon (we denote it also as~$\widetilde{\omega}$)
and, since the bundle is trivial, identity matrices
$\widetilde{S_{x_0^1,x_0^i}}=I$.

Note that the residues of the form $\widetilde{\omega}$ in all
singularities are    contained in Krichevers parameters, thus the
form ~$\widetilde{\omega}$ on the fundamental polygon can be
explicitly reconstructed from Krichevers parameters.  Now describe
how we can reconstruct the form~$\omega$ and
matrices~$S_{x_0^1,x_0^i}$ from~$\widetilde{\omega}$
and~$\widetilde{S_{x_0^1,x_0^i}}=I$. To reconstruct~$\omega$, we
must apply to~$\widetilde{\omega}$
 a gauge transformation~$\Gamma$, which must remove apparent singularities of~$\widetilde{\omega}$ in points of the fundamental polygon,  corresponding to the points
$\gamma_1,\dots,\gamma_m$ of the surface. The
transformation~$\Gamma$ can have singularities only in these points
and $\Gamma(x_0^1)=I$. Such transformation~$\Gamma$ can be found
using the residues of~$\widetilde{\omega}$.

To reconstruct $S_{x_0^1,x_0^i}$, we need to act on
$\widetilde{S_{x_0^1,x_0^i}}=I$ by the same gauge transformation by
the rule $\widetilde{S_{x_0^1,x_0^i}}= I\mapsto
S_{x_0^1,x_0^i}=\Gamma(x_0^i)$. Indeed,
$\widetilde{S_{x_0^1,x_0^i}}$ is a matrix of the operator that
identifies the stalks, hence after the base change according to the
definition ~\ref{def6} we obtain the matrix
$S_{x_0^1,x_0^i}=\Gamma(x_0^i)\widetilde{S_{x_0^1,x_0^i}}\Gamma(x_0^1)^{-1}$.
But two matrices in the right side are identity matrices, hence we
obtain the requied expression.

\subsection*{Acknolegment}
I would like to thank V.~A.~Poberejnij and R.~R.~Gontzov. The work
was supported by the  program of support of young researches
(grant~MK-4270.2011.1).

\end{document}